\numberwithin{equation}{section}
\theoremstyle{plain}  
\newtheorem{thm}[equation]{Theorem}
\newtheorem{prop}[equation]{Proposition}
\newtheorem{claim}[equation]{Claim}
\newtheorem{cor}[equation]{Corollary}
\theoremstyle{definition}  
\newtheorem{defn}[equation]{Definition}
\newtheorem{remark}[equation]{Remark}
\newcommand{\ra}{\rightarrow}
\newcommand{\R}{\mathbb R}
\newcommand{\Z}{\mathbb Z}
\newcommand{\C}{\mathbb C}
\newcommand{\E}{\mathbb E}
\newcommand{\ER}{\mathbb{ER}}
\DeclareMathOperator{\id}{id}
\begin{document}
\pagestyle{plain}

\title
{The $ER(n)$-cohomology of $BO(q)$, and real Johnson-Wilson orientations for vector bundles.}
\author{Nitu Kitchloo}
\author{W. Stephen Wilson}
\address{Department of Mathematics, Johns Hopkins University, Baltimore, USA}
\email{nitu@math.jhu.edu}
\email{wsw@math.jhu.edu}
\thanks{Nitu Kitchloo is supported in part by NSF through grant DMS
  1307875.}

\date{\today}


{\abstract

Using the Bockstein spectral sequence developed 
previously by the authors, 
we compute the ring $ER(n)^\ast(BO(q))$ explicitly. 
We then use this calculation to show that the 
ring spectrum $MO[2^{n+1}]$ is $ER(n)$-orientable (but 
not $ER(n+1)$-orientable), where $MO[2^{n+1}]$ is defined as the 
Thom spectrum for the self map of $BO$ given by 
multiplication by $2^{n+1}$.}
\maketitle


\section{Introduction}

The $p=2$ Johnson-Wilson theory, $E(n)$, is a well known complex 
oriented cohomology theory with coefficients given by:
\[
E(n)^* = \Z_{(2)}[v_1,v_2, \ldots, v_{n-1},v_n^{\pm 1}],
\]
where $|v_i|=-2(2^i-1)$. The orientation map $MU \longrightarrow 
E(n)$ can be constructed in the category of $\Z/2$-equivariant 
$MU$-module spectra, with the $\Z/2$-action on $MU$ representing 
complex conjugation $c$. As shown by Hu and Kriz in \cite{HK}, 
the spectra $E(n)$ are suitably complete so that the homotopy 
fixed point spectra $E(n)^{h\Z/2}$ agree with $E(n)^{\Z/2}$. 
We call this ring spectrum the `real' Johnson-Wilson theory, $ER(n)$. 

The main result is very straightforward and simple to state.
There are Conner-Floyd Chern classes, $\hat{c}_k \in ER(n)^*(BO(q))$,
for $0 < k \le q$, and corresponding complex conjugate classes, $\hat{c}_k^*$,
with their degrees given by $-k2^{n+2}(2^{n-1}-1)$.

\begin{thm}
\label{main}
There is a canonical isomorphism:
$$
 ER(n)^\ast(BO(q)) \simeq
 ER(n)^\ast[[\hat{c}_1, \ldots, \hat{c}_q]]/(
\hat{c}_1 - \hat{c}_1^\ast, \ldots, 
\hat{c}_q - \hat{c}_q^\ast ).
$$
\end{thm}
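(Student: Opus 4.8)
The plan is to feed the ring $E(n)^\ast(BO(q))$, with its conjugation action, into the authors' Bockstein spectral sequence. Recall that this spectral sequence comes from the cofibre sequence $\Sigma^{\lambda(n)}ER(n)\xrightarrow{\,x\,}ER(n)\xrightarrow{\,\rho\,}E(n)\xrightarrow{\,\delta\,}\Sigma^{\lambda(n)+1}ER(n)$, where $x\in ER(n)^{-\lambda(n)}$ with $\lambda(n)=2^{2n+1}-2^{n+2}+1$ and $x$ is nilpotent, $x^{2^{n+1}-1}=0$; note $\lambda(n)-1=2^{n+2}(2^{n-1}-1)$, so that the degree of $\hat c_k$ is $-k(\lambda(n)-1)$. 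Smashing the cofibre sequence with $BO(q)_+$ and passing to homotopy gives an exact couple, hence a spectral sequence converging strongly (by nilpotence of $x$) to $ER(n)^\ast(BO(q))$, filtered by powers of $x$, with $E_1=E(n)^\ast(BO(q))$ and $d_1=\beta$ the real Bockstein $\rho\delta$. The inputs needed are the ring $E(n)^\ast(BO(q))$ with its $\Z/2$-action and the vanishing of $\beta$ on Chern classes.

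First I would determine $E(n)^\ast(BO(q))$. Complexification $BO(q)\to BU(q)$ induces $E(n)^\ast[[c_1,\dots,c_q]]=E(n)^\ast(BU(q))\to E(n)^\ast(BO(q))$; since the complexification $\gamma_\C$ of the universal real bundle is isomorphic to its own conjugate, the conjugate Chern classes $c_k^\ast$ (the power series in the $c_j$ given by the $[-1]$-series of the formal group law of $E(n)$) satisfy $c_k=c_k^\ast$ in the image. I would then show this map is surjective with kernel exactly $(c_1-c_1^\ast,\dots,c_q-c_q^\ast)$, either by analysing the Serre/Eilenberg--Moore spectral sequence of $U(q)/O(q)\to BO(q)\to BU(q)$ — using the even, and after inverting $v_n$ torsion-free, $E(n)$-cohomology of $U(q)/O(q)$ — or by quoting the corresponding Brown--Peterson computation. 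For the $\Z/2$-action: the complex orientation of $E(n)$ is not conjugation-invariant, conjugation sending it to its formal inverse, so $\sigma(c_k)=c_k^\ast$; but in the quotient ring $c_k^\ast=c_k$, so $\sigma$ fixes every Chern class and acts on $E(n)^\ast(BO(q))$ only through its action on the coefficients $E(n)^\ast$, the Hu--Kriz action whose Bockstein spectral sequence on $E(n)^\ast$ recovers $ER(n)^\ast$.

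Next I would observe that the classes $\hat c_k$ exist in $ER(n)^\ast(BO(q))$ already, before running the spectral sequence: $\gamma_\C$ is a Real vector bundle over $BO(q)$ with trivial action on the base, so the Real orientation of $E(n)$ furnishes Real Conner--Floyd Chern classes $\hat c_k\in ER(n)^{-k2^{n+2}(2^{n-1}-1)}(BO(q))$ with $\rho(\hat c_k)=c_k$. Hence each $c_k$ lies in the image of $\rho$, so $\beta(c_k)=\rho\delta(c_k)=0$, and the Chern classes — and the subring they generate over $E(n)^\ast$ — are permanent cycles. Self-conjugacy of $\gamma_\C$ gives $\hat c_k=\hat c_k^\ast$, where $\hat c_k^\ast$ is the power series in the $\hat c_j$ with $ER(n)^\ast$-coefficients prescribed by the real formal group law, reducing mod $x$ to the expression for $c_k^\ast$. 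We thus obtain a canonical ring map
$$
\Phi\colon ER(n)^\ast[[\hat c_1,\dots,\hat c_q]]/(\hat c_1-\hat c_1^\ast,\dots,\hat c_q-\hat c_q^\ast)\longrightarrow ER(n)^\ast(BO(q)),
$$
and it remains to show $\Phi$ is an isomorphism. Applying $\rho$ turns $\Phi$ into the isomorphism $E(n)^\ast[[c]]/(c-c^\ast)\xrightarrow{\ \sim\ }E(n)^\ast(BO(q))$ of the previous step; since $x$ is nilpotent both sides of $\Phi$ are $x$-complete, and using that $\beta$ is a $\sigma$-twisted derivation vanishing on the $c_k$ one identifies the $E_r$-pages of the $BO(q)$ spectral sequence with those of a point, with the Chern classes adjoined as permanent cycles. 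The mod-$x$ isomorphism then propagates up the filtration (a five-lemma argument on each filtration quotient), giving that $\Phi$ is an isomorphism. The case $q=1$ should specialise to the known computation of $ER(n)^\ast(\R P^\infty)$, a useful consistency check.

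The main obstacle I expect is the joint control in the last two paragraphs: first, proving that $E(n)^\ast(BO(q))$ is exactly $E(n)^\ast[[c]]/(c-c^\ast)$ with $\Z/2$ acting only on coefficients — surjectivity of complexification and the precise size of the kernel is the delicate point, since the analogous integral statement fails — and second, showing the Bockstein spectral sequence for $BO(q)$ has no differentials beyond those forced by $\beta$ on the coefficients and no hidden multiplicative extensions, i.e.\ that it is genuinely the point's spectral sequence with the Chern classes freely adjoined and then completed. The twisted-derivation property of $\beta$, the vanishing $\beta(c_k)=0$, and the nilpotence of $x$ are what make this work; the labour is in the bookkeeping with the completed tensor product in $q$ variables.
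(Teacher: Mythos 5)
Your setup is the same as the paper's: run the Bockstein spectral sequence on $BO(q)$, use the Real orientation of $\mathbb{ER}(n)$ to produce the classes $\hat{c}_k\in ER(n)^\ast(BO(q))$ that reduce to $c_k v_n^{k(2^n-1)}$ and are therefore permanent cycles, and feed in the known computation of $E(n)^\ast(BO(q))$ as a quotient of $E(n)^\ast[[c_1,\dots,c_q]]$ by the conjugate-Chern-class relations. You also correctly name the hard step: showing the $BO(q)$ spectral sequence is the point's with the $\hat{c}_k$ freely adjoined, with no extra differentials and no hidden extensions.

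But you do not supply a mechanism that resolves that hard step, and the one you gesture at does not suffice. The Leibniz rule $d_r(ab)=d_r(a)\,b+c(a)\,d_r(b)$ together with $d_r(\hat{c}_k)=0$ controls $d_r$ only on \emph{decomposable} elements; to conclude that each $E_r$-page for $BO(q)$ is the tensor product over $\hat{E}(n)^\ast=\Z_{(2)}[\hat{v}_1,\dots,\hat{v}_n^{\pm1}]$ of the page for a point with the ring of permanent cycles, you need the homology of a tensor product to be the tensor product of homologies, i.e.\ a flatness statement. A ``five-lemma argument on each filtration quotient'' cannot bootstrap this, because the filtration quotients are exactly the $E_\infty$-terms you are trying to identify. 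The paper's resolution has two conceptual ingredients that are absent from your proposal: first, that the entire Bockstein spectral sequence for a point lives in the category of $E(n)_\ast E(n)$-comodules finitely presented over $E(n)^\ast$ (all of Section~4 of the paper is devoted to verifying this, including that the differentials $d_{2^{k+1}-1}$, which hit $\hat v_k$, are maps of comodules because the invariant ideals $I_k$ are invariant under the operations); and second, that $E(n)^\ast(BO(q))$—hence $\hat{E}(n)^\ast(BO(q))$—is Landweber flat, so tensoring with it is exact on that category. These combine (the paper's Theorem~\ref{ss}) to produce a spectral sequence $\hat{E}(n)^\ast(BO(q))\otimes_{\hat{E}(n)^\ast}\mbox{E}_\ast(pt)$ converging to $\hat{E}(n)^\ast(BO(q))\otimes_{\hat{E}(n)^\ast}ER(n)^\ast$, together with a canonical map to the $BO(q)$ spectral sequence that is an isomorphism on $\mbox{E}_1$ and therefore throughout. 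Without the Landweber-flatness-in-the-comodule-category input, the propagation you envision does not get off the ground; calling the remaining work ``bookkeeping with the completed tensor product'' understates a genuine missing idea.
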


This follows, in a non-trivial way, from the similar result
for $E(n)$, (\cite{WSW:BO}, \cite{KY}, 
\cite{Kriz},
\cite{RWY}). 
In the process, we give a nice description of $ER(n)^*$, which
has some value in its own right, and its computation.  We
show that our Bockstein spectral sequence from $E(n)^*$ to
$ER(n)^*$ takes place in a particularly nice category over
operations.  This allows us to use the established Landweber flatness
of $E(n)^*(BO(q))$ to accomplish the above result.

\begin{cor}
\label{cor}
There is a canonical isomorphism:
$$
 ER(n)^\ast(BO) \simeq
ER(n)^\ast[[\hat{c}_1, \hat{c}_2,\ldots ]]/(
\hat{c}_1 - \hat{c}_1^\ast ,
\hat{c}_2 - \hat{c}_2^\ast ,\ldots
).
$$
\end{cor}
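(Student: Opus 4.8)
The plan is to deduce Corollary~\ref{cor} from Theorem~\ref{main} by taking the limit over the standard exhaustion $BO(1)\subset BO(2)\subset\cdots$, $BO=\colim_q BO(q)$. The first step is the Milnor exact sequence for the cohomology theory $ER(n)$ applied to this colimit of CW subcomplexes:
\[
0 \lra {\lim_q}^1\, ER(n)^{\ast-1}(BO(q)) \lra ER(n)^\ast(BO) \lra \lim_q ER(n)^\ast(BO(q)) \lra 0 .
\]
So the proof splits into two parts: (i) the $\lim^1$ term vanishes, giving $ER(n)^\ast(BO)\simeq\lim_q ER(n)^\ast(BO(q))$; and (ii) this inverse limit is the ring in the statement.

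For (i) I would use Theorem~\ref{main} together with the multiplicativity of the Conner--Floyd classes. The map $BO(q-1)\hra BO(q)$ classifies $\gamma_{q-1}\oplus\underline\R$, so $\hat c_k$ restricts to $\hat c_k$ for $k<q$ while $\hat c_q$ (and likewise $\hat c_q^\ast$) restricts to $0$. Hence, under the identifications of Theorem~\ref{main}, the restriction homomorphism $ER(n)^\ast(BO(q))\to ER(n)^\ast(BO(q-1))$ is the evident $ER(n)^\ast$-algebra surjection
\[
ER(n)^\ast[[\hat c_1,\ldots,\hat c_q]]/(\hat c_i-\hat c_i^\ast) \lra ER(n)^\ast[[\hat c_1,\ldots,\hat c_{q-1}]]/(\hat c_i-\hat c_i^\ast)
\]
sending $\hat c_q\mapsto 0$. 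In particular the tower $\{ER(n)^\ast(BO(q))\}_q$ has surjective transition maps, so it satisfies the Mittag--Leffler condition and the $\lim^1$ term vanishes.

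For (ii) set $P_q=ER(n)^\ast[[\hat c_1,\ldots,\hat c_q]]$ and let $J_q\subset P_q$ be the ideal generated by $\hat c_1-\hat c_1^\ast,\ldots,\hat c_q-\hat c_q^\ast$, so that by convention the right-hand side of the Corollary is $\lim_q(P_q/J_q)$, the transition maps killing the last variable. In the tower of short exact sequences $0\to J_q\to P_q\to P_q/J_q\to 0$ the transition maps are surjective in each term: for $P_q$ this is the ``set $\hat c_q=0$'' map, and for $J_q$ it follows because $P_{q+1}\to P_q$ is onto and carries the generator $\hat c_i-\hat c_i^\ast$ of $J_{q+1}$ to the generator $\hat c_i-\hat c_i^\ast$ of $J_q$ (again using that the conjugate classes are natural). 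Therefore ${\lim_q}^1 J_q={\lim_q}^1 P_q=0$, and passing to the inverse limit of the short exact sequences gives
\[
\lim_q\big(P_q/J_q\big) \;\simeq\; \big(\lim_q P_q\big)\big/\big(\lim_q J_q\big) \;=\; ER(n)^\ast[[\hat c_1,\hat c_2,\ldots]]/(\hat c_1-\hat c_1^\ast,\hat c_2-\hat c_2^\ast,\ldots).
\]
Finally, the classes $\hat c_k,\hat c_k^\ast\in ER(n)^\ast(BO)$ pulled back from $BO$ restrict compatibly to the corresponding classes on each $BO(q)$, so the isomorphism from (i) identifies them with the tautological elements of $\lim_q(P_q/J_q)$; combined with (ii) this is exactly the asserted description.

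I expect the only point that is not purely formal to be the vanishing of the two $\lim^1$ terms, i.e.\ the Mittag--Leffler/surjectivity bookkeeping in parts (i) and (ii); but this is immediate from Theorem~\ref{main}, which exhibits all the relevant restriction maps as honest ``delete the top variable'' projections, so there is really nothing else to do.
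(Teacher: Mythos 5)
The paper states Corollary~\ref{cor} without giving an explicit proof, so there is no argument in the text to compare against directly; your proposal supplies the expected one, and it is correct. The Milnor $\lim^1$ sequence applied to $BO=\colim_q BO(q)$, combined with the observation that Theorem~\ref{main} makes the restriction maps into ``kill the top variable'' surjections (hence Mittag--Leffler), is the standard route. One small point you implicitly use and should flag: the fact that $\hat c_q^\ast$ restricts to $0$ on $BO(q-1)$ is not merely formal naturality of the power series, but follows because $[-_{\hat F}x]$ is $x$ times a unit power series, so $\hat c_q^\ast=\sigma_q([-_{\hat F}x_1],\ldots,[-_{\hat F}x_q])$ lies in the ideal $(\hat c_q)$ and dies when $\hat c_q\mapsto 0$; this is also what makes $J_q\to J_{q-1}$ surjective. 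An alternative the authors may have had in mind, closer in spirit to their proof of Theorem~\ref{main}, is to run the Bockstein spectral sequence of Section~5 directly on $X=BO$, using the known Landweber flatness of $E(n)^\ast(BO)$ (and Remark~\ref{hatflat}); Theorem~\ref{ss} then gives the result in one step without any $\lim^1$ bookkeeping. Your colimit argument has the advantage of being purely formal given Theorem~\ref{main}, while the direct spectral-sequence argument avoids the (mild) convention issue of interpreting the infinite power series ring modulo the closed ideal as $\lim_q(P_q/J_q)$; both are valid.
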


\begin{remark}
When $n=1$, $ER(1) = KO_{(2)}$, and this gives a description of
$KO_{(2)}^*(BO)$ that is quite different from the usual one
from \cite{AndersonKOBO} (and later \cite{AS}), where $KO^0(BO)$
is given by representations as $RO(O)\hat{}$. 
Our work is also clearly connected to \cite{HaraK}, where $KO^*(BO(q))$
is computed.
\end{remark}

We take a brief excursion into vector bundles with an
$ER(n)$ orientation.  Although clearly not the complete
answer, we prove the following theorem.

Define the bundle $2^k \xi$ over $BO$ to be the pullback of the 
universal bundle $\xi$ along the multiplication 
by $2^k$-map $[2^k] : BO \longrightarrow BO$. Let $MO[2^k]$ 
denote the Thom spectrum of the bundle $2^k\xi$.

\begin{thm}
\label{orient}
$MO[2^{n+1}]$ admits a canonical $ER(n)$-orientation. In particular, 
given any real vector bundle $\eta : V \rightarrow B$, the 
bundle $2^{n+1}\eta$ admits an $ER(n)$-orientation. 
$MO[2^{n+1}]$ does not admit an $ER(n+1)$-orientation. 
\end{thm}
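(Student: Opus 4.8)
The plan is to treat the existence and the non-existence statements separately.

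For the existence of an $ER(n)$-orientation, I would first upgrade $2^{n+1}\xi$ from a real to a complex bundle: since $2\xi\cong\xi\otimes_{\R}\C$ as real bundles, one has $2^{n+1}\xi\cong 2^{n}(\xi\otimes_{\R}\C)$, the underlying real bundle of the complex bundle $\mathbb W:=(\xi\otimes_{\R}\C)^{\oplus 2^{n}}$ over $BO$. Thus $MO[2^{n+1}]$ is the Thom spectrum of $\mathbb W$, and because $E(n)$ is complex orientable there is a Thom class $U\in E(n)^{\ast}(MO[2^{n+1}])$; by the Landweber flatness of $E(n)^{\ast}(BO(q))$ already used in the proof of Theorem~\ref{main}, this $U$ generates $\widetilde{E(n)}^{\ast}(MO[2^{n+1}])$ freely over $E(n)^{\ast}(BO)$. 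The substance of the argument is then to push $U$ through the Bockstein spectral sequence of the authors to an honest $ER(n)$-class. Here the complex structure is essential: $\mathbb W$ is the complexification of $2^{n}\xi$, hence carries a canonical Real structure in the sense of Atiyah, so $U$ is the image under the Real orientation $MU_{\R}\to E(n)$ (Hu--Kriz) of a Thom class for the associated Real Thom spectrum. In particular $U$ is fixed by the $\Z/2$-action computing $ER(n)=E(n)^{h\Z/2}$, and one must check that the obstructions to lifting it, which live in the $E_{r}$-terms of the Bockstein (equivalently homotopy fixed point) spectral sequence inside the ``nice'' category over operations set up in the earlier sections, vanish once the bundle is replaced by $2^{n+1}$ copies. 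The Thom class so produced restricts to a unit on the fibre, which is the asserted canonical $ER(n)$-orientation; pulling it back along the classifying map $B\to BO$ of any real bundle $\eta:V\to B$ then gives the ``in particular'' statement for $2^{n+1}\eta$.

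For the non-existence of an $ER(n+1)$-orientation, the plan is to restrict to the universal line bundle. If $MO[2^{n+1}]$ were $ER(n+1)$-orientable, then pulling back along $\R P^{\infty}=BO(1)\hookrightarrow BO$ would make $2^{n+1}\gamma_{1}$ over $\R P^{\infty}$ $ER(n+1)$-orientable, i.e.\ the restriction $\widetilde{ER(n+1)}^{\,2^{n+1}}\!\big(\mathrm{Th}(2^{n+1}\gamma_{1})\big)\to\widetilde{ER(n+1)}^{\,2^{n+1}}(S^{2^{n+1}})=ER(n+1)^{0}$ would carry some class to a unit. I would then compare this with the $q=1$ case of Theorem~\ref{main} and the authors' earlier computation of $ER(n+1)^{\ast}(\R P^{\infty})$: the stunted Thom complex $\mathrm{Th}(2^{n+1}\gamma_{1})$ is $(2^{n+1}-1)$-connected with first nonzero Stiefel--Whitney class exactly in degree $2^{n+1}$ (namely $w_{1}(\gamma_{1})^{2^{n+1}}$), which is precisely one ``level'' too low for $ER(n+1)$; tracing this class through the Bockstein spectral sequence shows the would-be Thom class supports a nontrivial differential, so no element of $\widetilde{ER(n+1)}^{\,2^{n+1}}(\mathrm{Th}(2^{n+1}\gamma_{1}))$ restricts to a unit. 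This contradiction yields the last assertion.

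The main obstacle is the existence half, and more precisely the sharpness of the multiplicity $2^{n+1}$: one must identify the obstruction to descending the (Real, hence $\Z/2$-fixed) $E(n)$-Thom class of $\mathbb W$ to $ER(n)$ with an explicit differential or hidden extension in the Bockstein spectral sequence, and show that it is annihilated exactly when the bundle is replaced by $2^{n+1}$ copies -- the intuition being that the relevant twisted degree $2^{n+1}\sigma$ becomes untwisted over $ER(n)$ only at this threshold. This is where the detailed description of $ER(n)^{\ast}$ and the good behaviour of the Bockstein spectral sequence over operations, developed in the preceding sections, carry the weight; by analogy with the proof of Theorem~\ref{main}, where the same machinery converts the Landweber-flat $E(n)$-answer into the $ER(n)$-answer, the remaining bookkeeping should be routine once the threshold is pinned down.
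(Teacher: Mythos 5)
Your high-level framing matches the paper's: produce a conjugation-fixed $E(n)$-Thom class for $MO[2]$ (which the paper does via the Conner--Floyd Chern-class computation, while you invoke the Real structure of $MU_{\R}$), and then show that it, suitably pulled back, survives the Bockstein spectral sequence. But the argument that it \emph{does} survive is precisely where you write ``the remaining bookkeeping should be routine once the threshold is pinned down,'' and this is not bookkeeping --- it is the entire content of the existence half. The paper's mechanism is an induction on $k$: writing $2^{k}\xi = 2(2^{k-1}\xi)$ gives a map $\tau : MO[2^{k}] \to MO[2^{k-1}]\wedge MO[2^{k-1}] \to MO[2^{k-1}]$, and setting $u_k = \tau^{\ast}(u_{k-1})$ one computes
\[
d_{2^{k}-1}(u_k) = \Delta^{\ast}\bigl(d_{2^{k}-1}(u_{k-1})\wedge u_{k-1} + u_{k-1}\wedge d_{2^{k}-1}(u_{k-1})\bigr) = 2\,\Delta^{\ast}\bigl(d_{2^{k}-1}(u_{k-1})\wedge u_{k-1}\bigr)=0,
\]
using the Leibniz formula from Theorem \ref{bss}\,(vi), the invariance of $\Delta$ under the swap, $c(u_{k-1})=u_{k-1}$, and the fact that the target is a $\Z/2$-vector space. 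The intermediate differentials $d_{2^k+r}$, $0\le r<2^k-1$, are then killed by a degree argument using Theorem \ref{structure}. Crucially, the footnote in the paper notes that $d_{2^{k}-1}(u_{k-1})$ is generally \emph{nonzero}, so $u_{k-1}$ does not itself survive --- the survival is created by each new doubling via the factor of $2$. Your proposal gives no mechanism of this kind; asserting that the Real Thom class is $\Z/2$-fixed only handles $d_1$.

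For the non-orientability, you propose restricting $MO[2^{n+1}]$ along $BO(1)\hookrightarrow BO$ and examining the stunted projective space $\R P^{\infty}_{2^{n+1}}$, appealing to Stiefel--Whitney class degrees being ``one level too low.'' This is a plausible starting point but is not an argument: you would still need to run the Bockstein spectral sequence for that Thom complex and locate a nontrivial differential on the would-be Thom class, which you do not do. The paper instead works equivariantly with the virtual bundle $\zeta$ over $B\Z/2$ having Thom space $E\Z/2_{+}\wedge_{\Z/2}S^{\alpha-1}$; an $ER(n{+}1)$-orientation of $MO[2^{n+1}]$ would produce a class in $\pi_{2^{n+1}(\alpha-1)}\E\R(n{+}1)$ restricting to a unit nonequivariantly, and Hu--Kriz's bigraded homotopy computation shows no such class exists. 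So the paper's proof is a clean equivariant obstruction, whereas your sketch would require an independent Bockstein computation for stunted projective spaces that you have not supplied.
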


In Section 2 we set up our Bockstein spectral sequence
in a slightly different way than we did in 
\cite{NituP} and \cite{NituP2}.  We describe the behavior of the spectral 
sequence for $ER(n)^*(pt)$ in Section 3 and show all the modules and
differentials are in our special category in Section 4.
The computation of $ER(n)^*(BO(q))$ is completed in Section 5
and the techniques are applied to orientations in Section 6.

\section{The Bockstein Spectral Sequence:}

In the paper \cite{Nitufib} we show that the homotopy 
ring of $ER(n)$ is a subquotient of a ring:
\[ \Z_{(2)}[x, \hat{v}_1, \hat{v}_2, \ldots, \hat{v}_{n-1},v_n^{\pm 1}], \] 
where $x$ is an element of $\pi_\ast ER(n)$ in 
degree $\lambda = \lambda(n) = 2^{2n+1}-2^{n+2}+1 = 2(2^n-1)^2-1$. 
The classes $\hat{v}_k$ for $k \leq n$ exist 
in $ER(n)^\ast$ in degree $2^{n+2}(2^{n-1}-1)(2^k-1)$, 
and map to the respective classes of the same name 
$\hat{v}_k = v_k v_n^{-(2^n-1)(2^k-1)}$ 
under the canonical map from $ER(n)^\ast$ to $E(n)^\ast$, where
$v_0 = 2$. 

In \cite{Nitufib} we also construct a fibration of spectra:
\[
\Sigma^\lambda ER(n) \ra ER(n) \ra E(n)
\]
with the first map given by multiplication with the class $x$. 
The class $x$ is a $2$-torsion class of exponent $2^{n+1}-1$, 
and consequently this fibration leads to a convergent 
Bockstein spectral sequence described explicitly by the following theorem: 

\begin{thm} (compare with \cite[Theorem 4.2]{NituP} 
\footnote{In {\em loc. cit.} we consider the untruncated 
version of the spectral sequence converging to zero.})
\label{bss} \mbox{ }
\begin{enumerate}[(i)]
\item
For $X$ a spectrum,
the above fibration yields a first and fourth quadrant 
spectral sequence of $ER(n)^\ast$-modules, $\mbox{E}_r^{i, j}(X) 
\Rightarrow ER(n)^{j-i}(X)$. The differential $d_r$ has 
bi-degree $(r, r+1)$ for $r \geq 1$.
\item
The $\mbox{E}_1$-term is given by: $\mbox{E}_1^{i,j}(X) = 
E(n)^{i\lambda +j-i}(X)$, with 
\[ d_1(z) = v_n^{-(2^n-1)}(1-c)(z), \,  
\text{ where }  \, 
c(v_i)= -v_i.
\]
The differential $d_r$ 
increases cohomological degree by $1+r\lambda$
between the appropriate 
sub-quotients of $E(n)^\ast(X)$. 
\item
For $r < 2^{n+1}$, the targets of the differentials,
$d_r$, represent the image of $x^r$-torsion generators 
of\break $ER(n)^*(X)$ inside $E(n)^\ast(X)$. 
\item
$\mbox{E}_{2^{n+1}}(X) = \mbox{E}_{\infty}(X)$, which is 
described as follows: Filter $M = ER(n)^*(X)$ by $M_i = x^i  M$ so that:
\[
M = M_0 \supset M_1 \supset M_2 \supset \cdots \supset M_{2^{n+1}-1} = \{0 \}.
\]
Then $\mbox{E}_\infty^{r,\ast}(X)$ is canonically isomorphic to $M_r/M_{r+1}$.  
\item
The following are
all vector
spaces over $\Z/2$:
$$
M_i/M_j, \quad j \ge i > 0, 
\mbox{ and }
\mbox{E}_r^{i,j}(X) 
\quad r > 1
\mbox{ and }
i > 0.
$$
\item 
$d_r(ab) = d_r(a) b + c(a) d_r(b)$. 
In particular, if $c(z) = z \in \mbox{E}_r(X)$, then $d_r(z^2) = 0$, $r > 1$.

\end{enumerate}
\end{thm}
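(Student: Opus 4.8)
The plan is to realize the spectral sequence as the one attached to the exact couple obtained by mapping $X$ into the fibration $\Sigma^{\lambda}ER(n)\xrightarrow{\ x\ }ER(n)\to E(n)$, and then to read off (i)--(vi) from the algebra of that couple together with the two numerical facts recorded above, $2x=0$ and $x^{2^{n+1}-1}=0$. Concretely, applying $[X,-]^{\ast}$ to the cofibre sequence $\Sigma^{\lambda}ER(n)\xrightarrow{x}ER(n)\xrightarrow{j}E(n)\xrightarrow{\partial}\Sigma^{\lambda+1}ER(n)$ gives the long exact sequence
\[
\cdots\to ER(n)^{\ast-\lambda}(X)\xrightarrow{\ x\ }ER(n)^{\ast}(X)\xrightarrow{\ j\ }E(n)^{\ast}(X)\xrightarrow{\ \partial\ }ER(n)^{\ast-\lambda+1}(X)\to\cdots,
\]
an exact couple of $ER(n)^{\ast}$-modules with $D=ER(n)^{\ast}(X)$, $E=E(n)^{\ast}(X)$, structure map $i$ equal to multiplication by $x$ (internal degree $\lambda$), and $j$, $\partial$ of degrees $0$ and $\lambda+1$. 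Using the (unrolled) bookkeeping of \cite[\S4]{NituP}, with the filtration degree recording the power of $x$, this produces the spectral sequence of the statement: since $x^{i}=0$ for $i\ge 2^{n+1}-1$ the filtration degree lies in $0\le i\le 2^{n+1}-2$, so the spectral sequence occupies the first and fourth quadrants, $d_r$ has bidegree $(r,r+1)$ by the standard analysis of derived couples, and $\mathrm{E}_1^{i,j}(X)=E(n)^{i\lambda+j-i}(X)$; tracking $|x|=\lambda$ against the degree $\lambda+1$ of $\partial$ shows $d_r$ raises cohomological degree in $E(n)^{\ast}(X)$ by $1+r\lambda$. This settles (i) and the formal part of (ii).

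The substance of (ii) is the first-$k$-invariant formula $d_1(z)=v_n^{-(2^n-1)}(1-c)(z)$; this is the one genuinely nontrivial input, which I would import from the $\Z/2$-equivariant (complex-conjugation) construction of the fibration in \cite{Nitufib} (cf.\ \cite[Theorem~4.2]{NituP}): the operation $E(n)^{\ast}(X)\xrightarrow{\partial}ER(n)^{\ast-\lambda+1}(X)\xrightarrow{j}E(n)^{\ast-\lambda+1}(X)$ has degree $\lambda+1=\lvert v_n^{-(2^n-1)}\rvert$ and, being a stable operation on the Landweber-exact theory $E(n)$, is pinned down by its effect on coefficients, which the torsion description of $ER(n)^{\ast}$ forces to be $v_n^{-(2^n-1)}(1-c)$ with $c(v_i)=-v_i$. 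Granting this, the twisted Leibniz rule at the first page is a one-line check, using that $c$ is multiplicative and $c(v_n^{-(2^n-1)})=-v_n^{-(2^n-1)}$ (as $2^n-1$ is odd): $d_1(ab)=v_n^{-(2^n-1)}(ab-c(a)c(b))=d_1(a)\,b+c(a)\,d_1(b)$.

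Parts (iii)--(v) are then standard Bockstein-couple bookkeeping specialized to the nilpotent $x$. In the $r$-th derived couple $D_r=x^{r-1}ER(n)^{\ast}(X)$ and $\mathrm{E}_r=Z_r/B_r$ with $Z_r=\partial^{-1}(\im x^{r-1})$, $B_r=j(\ker x^{r-1})$, so for $r<2^{n+1}$ a class of $\mathrm{E}_r(X)$ supporting a nonzero $d_r$ is exactly the $j$-image in $E(n)^{\ast}(X)$ of a class of $ER(n)^{\ast}(X)$ killed by $x^{r}$ but not $x^{r-1}$, which is (iii). Since $x^{2^{n+1}-1}=0$, the derived couples stabilize, $\mathrm{E}_{2^{n+1}}(X)=\mathrm{E}_{\infty}(X)$, and the usual identification of the $\mathrm{E}_{\infty}$-page of a couple with nilpotent $i$ with the associated graded of the filtration by powers of $i$ gives $\mathrm{E}_{\infty}^{r,\ast}(X)\cong M_r/M_{r+1}$ for $M_r=x^{r}M$, $M=ER(n)^{\ast}(X)$, which terminates at $M_{2^{n+1}-1}=0$; this is (iv). For (v): $2M_i=x^{i-1}(2x)M=0$ for $i\ge1$, so each $M_i/M_j$ is a $\Z/2$-vector space; and it suffices to show $\mathrm{E}_2^{i,j}(X)$ is $2$-torsion for $i\ge1$, since $\mathrm{E}_r^{i,j}$ $(r>1)$ is a subquotient of it. If $z\in\mathrm{E}_1^{i,j}$ lies in $\ker d_1$ then $c(z)=z$ (as $v_n$ is invertible), hence $d_1(v_n^{2^n-1}z)=v_n^{-(2^n-1)}(v_n^{2^n-1}z-c(v_n^{2^n-1})c(z))=2z$; when $i\ge1$ the class $v_n^{2^n-1}z$ sits in filtration degree $i-1\ge0$, so this identity lives in the spectral sequence and shows $2z\in\im d_1$.

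For (vi), the twisted Leibniz rule propagates from $d_1$ to all pages because the exact couple is one of $ER(n)^{\ast}(X)$-modules compatibly with the involution $c$ --- equivalently, the construction is $\Z/2$-equivariant and $\partial$ is $c$-anti-equivariant, which is the origin of the twist (again as in \cite[Theorem~4.2]{NituP}). The corollary is then formal: if $c(z)=z$ in $\mathrm{E}_r(X)$, $r>1$, then $d_r(z^2)=d_r(z)\,z+c(z)\,d_r(z)=2\,z\,d_r(z)$, and $z\,d_r(z)$ has filtration degree $\ge r\ge2>0$, so it is killed by $2$ by (v). I expect the identification of $d_1$ in (ii) to be the only real obstacle --- it and the $c$-anti-equivariance of $\partial$ rely on the $\Z/2$-equivariant construction of \cite{Nitufib}, while everything downstream is the algebra of a finite, nilpotent Bockstein couple plus $2x=0$ and $x^{2^{n+1}-1}=0$. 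The one further subtlety, and the only real difference from \cite{NituP}, is the truncation: one must check that the stabilized $\mathrm{E}_{\infty}$ is the associated graded of the genuine $x$-adic filtration of $ER(n)^{\ast}(X)$, rather than --- as in the telescoped, converging-to-zero version of \cite{NituP} --- of a vanishing homotopy limit.
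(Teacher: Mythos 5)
The paper does not actually prove Theorem~\ref{bss}: it points to \cite[Theorem~4.2]{NituP} (with the footnote noting the truncation). So there is no in-text proof to compare against. That said, your proposal is the standard Bockstein--exact-couple argument that the cited reference uses, and the degree bookkeeping is right: with $|x|=-\lambda$ in $ER(n)^\ast$, the couple has $\deg i=-\lambda$, $\deg j=0$, $\deg\partial=\lambda+1$, so $d_r$ raises $E(n)$-degree by $(\lambda+1)-(r-1)(-\lambda)=1+r\lambda$, matching bidegree $(r,r+1)$. Your observation for (v) --- that if $c(z)=z$ then $d_1(v_n^{2^n-1}z)=2z$, using $c(v_n^{2^n-1})=-v_n^{2^n-1}$ and $|v_n^{2^n-1}|=-(\lambda+1)$ so that $v_n^{2^n-1}z$ sits one step lower in filtration --- is a clean and correct way to see the $2$-torsion in positive filtration at $\mathrm{E}_2$; subquotients then handle $r>2$. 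And in (vi) the twist is genuinely only visible at $r=1$, since for $r>1$ every class is $c$-invariant, after which $d_r(z^2)=2z\,d_r(z)=0$ by (v), as you say.

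One slip worth fixing is in your reading of (iii). You write that a class of $\mathrm{E}_r(X)$ \emph{supporting} a nonzero $d_r$ is the $j$-image of an $x^r$-torsion class; but the statement (and the couple algebra) concerns the \emph{target}. If $[z]\in\mathrm{E}_r$ with $\partial(z)=x^{r-1}\tilde w$, then $x^r\tilde w = x\,\partial(z)=0$ and $d_r[z]=[j(\tilde w)]$; so it is $d_r[z]$, not $[z]$, that is the $j$-image of the $x^r$-torsion element $\tilde w$, and $z$ itself generally lies outside $\operatorname{im} j$ (indeed $z\in\operatorname{im} j$ exactly when $\partial(z)=0$, i.e.\ when $z$ is a permanent cycle). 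This is a wording error rather than a structural gap, but as written it misstates what (iii) says. Everything else --- including your closing remark distinguishing the truncated filtration $M=M_0\supset\cdots\supset M_{2^{n+1}-1}=0$ from the telescoped version of \cite{NituP} --- is on target.
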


\begin{remark}
When $X$ is a space,
notice that there is a canonical 
class $ y \in \mbox{E}_1^{1,-\lambda +1}(X)$ that 
corresponds to the unit element under the identification 
of $\mbox{E}_1^{1,-\lambda+1}(X) = E(n)^0(X)$. 
This class represents the element $x \in ER(n)^{-\lambda}$, 
and is therefore a permanent cycle. We may use this 
class to simplify the notation.
$$
\mbox{E}_1^{\ast, \ast}(X) = \mbox{E}_1^{0,\ast}(X)[y] = E(n)^\ast(X)[y], 
\quad |y|=(1,-\lambda +1). 
$$
$$
d_1(z) = y \, v_n^{-(2^n-1)}(1-c)(z), \,  
\text{ where }  \, 
v_n \in \mbox{E}_1^{0,-2(2^{n}-1)}.
$$
\end{remark}

\begin{remark}
The formal structure of differentials in the 
Bockstein spectral sequence appears to be identical to that of the 
Borel homology spectral sequence for the real 
spectrum $\mathbb{ER}(n)$ described in \cite{HK}. 
Since they both converge to the same object, one may be 
tempted to conclude that {\em they must be the same spectral sequence}. 
However, these spectral sequences encode very different 
types of information, and to our knowledge there is no direct 
way to identify them. Note also that we have 
formulated the spectral sequence in cohomology. 
Of course, there is a corresponding homology 
Bockstein spectral sequence $\mbox{E}^r(X)$ converging to $ER(n)_\ast(X)$. 
\end{remark}

\section{The spectral sequence for $X=pt$ }

In this section, 
we organize the ring $ER(n)^\ast$ in terms of the 
Bockstein spectral sequence, $\mbox{E}_\ast(X)$, for $X$ a point. 
The theorem is not a new calculation of $ER(n)^*$, 
which is already known from \cite{HK}, and we use their results.  
This approach could not give us a new calculation because the
very fibration the spectral sequence depends on comes out of
a calculation from \cite{HK}.
What the theorem is about is the behavior of 
the Bockstein spectral sequence.

Our description of $ER(n)^*$ is much nicer than the usual descriptions
and gives us access to information we need for the computation of
$ER(n)^*(BO(q))$.
We present the spectral sequence as a sum of very nice modules over the
ring 
$R_n = \Z_{(2)}[\hat{v}_1,\ldots,\hat{v}_{n-1}]$.
In our description, we need the ideals
$I_j=(2,\hat{v}_1,\ldots,\hat{v}_{j-1})$ for $0< j \le n$, and $I_0 = (0)$.

Although our modules are not always cyclic, they will all be
associated with particularly nice elements, namely $  v_n^k \, y^m$.
It is important to observe that the modules $I_i R_n/I_j$, which
naturally come up often in the proof, 
are trivial whenever $0 < i \le j$.
We set $R_n/I_{n+1} = 0$.

Our description is as follows:

\begin{thm} \label{structure}
In the 
spectral sequence
$\mbox{E}_r(pt) 
\Rightarrow ER(n)^{*}$, 
\begin{enumerate}[(i)]

\item
$$
\mbox{E}_1  \simeq
\Z_{(2)}[y, \hat{v}_1, \hat{v}_2, \ldots, \hat{v}_{n-1},v_n^{\pm 1}].
$$
That is,
$$
\mbox{E}_{1}^{m,*}=
\Z_{(2)}[\hat{v}_1, \hat{v}_2, \ldots, \hat{v}_{n-1},v_n^{\pm 1}] \quad 
\mbox{on} \quad y^m.
$$

\item
The only non-zero differentials are generated by
$$
d_{2^{k+1}-1} (v_n^{-2^k}) = \hat{v}_k \, y^{2^{k+1}-1} \, v_n^{-2^{n+k}}
\quad \mbox{ for } \quad
0 \le k \le n.
$$

\item
$
\mbox{E}_{2^k}^{*,*}=
\mbox{E}_{2^{k+1}-1}^{*,*},
$
for $0 \le k \le n$, and 
$
\mbox{E}_{2^{n+1}}^{*,*}=
\mbox{E}_{\infty}^{*,*}.
$

\item
For $0 \le j <  k \le n+1$, 
$$
\mbox{E}_{2^k}^{m,*}=
R_n[ v_n^{\pm 2^k}]/I_j \bigoplus_{j < i < k}
I_i R_n[v_n^{\pm  2^{i+1}}] v_n^{2^i }/I_j 
\quad \mbox{ on } \quad y^m.
$$
when $2^j -1 \le m < 2^{j+1} -1$.

\item
For $0 < k \le n+1$ and
$ 2^k-1 \le m$
$$
\mbox{E}_{2^k}^{m,*}=
R_n[ v_n^{\pm 2^k}]/I_k 
\quad \mbox{ on } \quad  y^m. 
$$

\end{enumerate}
\end{thm}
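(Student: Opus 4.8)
The plan is to run the spectral sequence of Theorem~\ref{bss} from its $\mbox{E}_1$-page through to $\mbox{E}_\infty$, recording the intermediate pages, and to use the known value of $ER(n)^\ast$ from \cite{HK} both to pin down the differentials and to certify that there are no others.

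Part~(i) is a change of polynomial generators. Since $v_n$ is a unit in $E(n)^\ast$, the substitution $v_k = \hat v_k\, v_n^{(2^n-1)(2^k-1)}$ is invertible, so $\mbox{E}_1^{0,\ast} = E(n)^\ast = \Z_{(2)}[\hat v_1,\dots,\hat v_{n-1},v_n^{\pm 1}]$, and adjoining the permanent cycle $y$ gives~(i). Because $(2^n-1)(2^k-1)$ is odd for $1\le k\le n$, the involution $c$ fixes each $\hat v_k$ and $y$ and sends $v_n\mapsto -v_n$; hence the formula in Theorem~\ref{bss}(ii) gives $d_1(v_n^m)=0$ for $m$ even and $d_1(v_n^m)=2y\,v_n^{m+1-2^n}$ for $m$ odd, and together with the twisted Leibniz rule of Theorem~\ref{bss}(vi) this determines $d_1$ on all of $\mbox{E}_1$. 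Taking homology yields $\mbox{E}_2^{0,\ast} = R_n[v_n^{\pm 2}]$ and $\mbox{E}_2^{m,\ast} = R_n[v_n^{\pm 2}]/I_1$ on $y^m$ for $m\ge 1$ (with $I_1=(2)=(\hat v_0)$), which is exactly the $k=1$ instance of~(iv),~(v), together with the $k=0$ case of~(ii),~(iii).

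The inductive engine is the following. Assume $\mbox{E}_{2^k}$ has the form asserted in~(iv),~(v). For $2^k\le r\le 2^{k+1}-2$ we claim $d_r=0$: by Theorem~\ref{bss}(iii) the only possible targets of $d_r$ are images of $x^r$-torsion generators of $ER(n)^\ast$, and the structure of $ER(n)^\ast$ from \cite{HK} shows that fresh torsion appears only at lengths $r=2^{j+1}-1$, leaving nothing for an intermediate $d_r$ to hit; this is the stabilization $\mbox{E}_{2^k}=\mbox{E}_{2^{k+1}-1}$ of~(iii). (When $r$ is even one can also argue directly: $d_r$ shifts cohomological $E(n)$-degree by the odd number $1+r\lambda$, impossible between even-degree classes.) For $r=2^{k+1}-1$ the same principle of Theorem~\ref{bss}(iii) identifies the target with the image of a $\hat v_k$-multiple, and a degree count then forces $d_{2^{k+1}-1}(v_n^{-2^k}) = \hat v_k\, y^{2^{k+1}-1}\, v_n^{-2^{n+k}}$, which is~(ii); extending by Theorem~\ref{bss}(vi) and $R_n[y]$-linearity determines $d_{2^{k+1}-1}$ on all of $\mbox{E}_{2^{k+1}-1}$.

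What remains is to compute $\mbox{E}_{2^{k+1}}$ as the homology of the explicit module of~(iv),~(v) under this explicit derivation and to check that it again has the stated shape with $k$ replaced by $k+1$; this bookkeeping is where the real work lies, and I expect it to be the main obstacle. The essential subtlety --- already present in the case $n=1$, $k=1$ --- is that the target of $d_{2^{k+1}-1}$ is a $\Z/2$-vector space on $y^m$ for $m>0$ (Theorem~\ref{bss}(v)), so that although $v_n^{2^k}$ supports a nonzero differential, the classes in $I_k\, v_n^{2^k}$ are cycles; sorting out which of these combinations survive is precisely what produces the new summands $I_i R_n[v_n^{\pm 2^{i+1}}]\,v_n^{2^i}/I_j$, and re-indexing the direct sum to match~(iv) at the next value of $k$ --- using the triviality of $I_iR_n/I_j$ for $0<i\le j$ and the convention $R_n/I_{n+1}=0$ recorded just before the theorem --- is the bulk of the argument. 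Finally, for $k=n$ the last differential is $d_{2^{n+1}-1}$, and $\mbox{E}_{2^{n+1}}=\mbox{E}_\infty$ by Theorem~\ref{bss}(iv); comparing this $\mbox{E}_\infty$ with the $x$-adic associated graded of $ER(n)^\ast$ from \cite{HK} completes~(iv),~(v) and simultaneously confirms that the differentials in~(ii) are the only nonzero ones, since any further nonzero differential would make $\mbox{E}_\infty$ strictly smaller than $\gr ER(n)^\ast$.
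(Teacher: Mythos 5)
Your strategy matches the paper's: rewrite $\mbox{E}_1$ in the permanent-cycle generators $\hat v_k$, import from \cite{HK} the facts that the $\hat v_k$ are permanent cycles and that $\hat v_k v_n^{a2^{k+1}}$ is exactly $y^{2^{k+1}-1}$-torsion (which pins down parts~(ii) and~(iii)), compute $d_1$ explicitly to establish the $k=1$ case, and then induct on $k$. Your parity observation --- that $d_r$ is automatically zero for even $r$ because it shifts $E(n)$-degree by the odd number $1+r\lambda$ --- is a tidy supplement the paper does not bother to record.

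The gap is that you explicitly defer the crux of the induction: having fixed $d_{2^{k+1}-1}$, you describe the homology computation as ``where the real work lies'' and predict rather than prove its shape. That verification \emph{is} the proof, and it is in fact short. Assume $\mbox{E}_{2^{k+1}-1}$ has the form of~(iv),~(v). First, $d_{2^{k+1}-1}$ vanishes on every summand $I_iR_n[v_n^{\pm 2^{i+1}}]v_n^{2^i}/I_j$ with $i<k$: the would-be target sits in filtration $m\ge 2^{k+1}-1$, where the only available group is $R_n[v_n^{\pm 2^k}]/I_k$, and since $I_i\subseteq I_k$ and the differential is $\hat{E}(n)^\ast$-linear, the image of $I_i$ is zero there. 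Second, on the leading summand $R_n[v_n^{\pm 2^k}]/I_j$ on $y^m$, split it as $R_n[v_n^{\pm 2^{k+1}}]/I_j$ on the two generators $1$ and $v_n^{2^k}$; the differential kills the first block and carries the second into the first at filtration $m+2^{k+1}-1$ with image $\hat v_k R_n[v_n^{\pm 2^{k+1}}]/I_k$. Hence the cokernel at filtration $\ge 2^{k+1}-1$ is $R_n[v_n^{\pm 2^{k+1}}]/I_{k+1}$, which is~(v) with $k$ replaced by $k+1$, while the kernel on the $v_n^{2^k}$-block is $I_kR_n[v_n^{\pm 2^{k+1}}]v_n^{2^k}/I_j$, the new $i=k$ term in~(iv); when $j=k$ this is trivial, consistent with the convention $I_iR_n/I_j=0$ for $0<i\le j$. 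With that paragraph filled in, your outline and the paper's proof are one and the same.
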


\begin{remark}
Note two things.  First, when $m=0$ we must have $j=0$ and $I_j = 0$.
Second, when $k=n+1$, and $2^{n+1} - 1 \le m$,
$\mbox{E}_{\infty}^{m,*}= 0$.
\end{remark}

\begin{remark}
To put this all on familiar territory, recall that $E(1)=KU_{(2)}$ and
$ER(1)=KO_{(2)}$.
From above, we have
$$
\mbox{E}_1^{*,*}\simeq 
\Z_{(2)}[y,v_1^{\pm 1}].
$$
Recall that $d_1$ is generated by $d_1(v_1^{-1})=2 y v_1^{-2}$.
Keeping in mind that $R_1 = \Z_{(2)}$ and $R_1/I_1 = \Z/2$,
we get
$$
\mbox{E}_2^{0,*}\simeq 
\Z_{(2)}[v_1^{\pm 2}]
\quad \mbox{ and } \quad
\mbox{E}_2^{m,*}\simeq 
\Z/2[v_1^{\pm 2}]y^m, \quad m > 0.
$$
We have $d_3$ is generated by $d_3(v_1^{-2}) = \hat{v}_1 y^3 v_1^{-4}$.
Recalling that $\hat{v}_1 = v_1 v_1^{-(2^1-1)(2^1-1)} = 1$, this is
really
$d_3(v_1^{-2}) =  y^3 v_1^{-4}$.

We get, with $I_1 R_1 = 2 \Z_{(2)}$, 
$\mbox{E}_{\infty}^{m,*}$ is, for $m=0$,
$$
\Z_{(2)}[v_1^{\pm 4}] 
$$
$$
2 \Z_{(2)}[v_1^{\pm 4}] v_1^2 
$$
and, for $0 < m < 3$,
$$
\Z/2[v_1^{\pm 4}] \quad \mbox{ on } \quad   y^m.
$$
This is our
usual homotopy of $KO_{(2)}$. 
\end{remark}

\begin{remark}
A more illuminating and less familiar example, except for those familiar
with 
\cite{Nitufib},
\cite{NituER2}, 
\cite{NituP}, 
\cite{NituP2}, and \cite{Kitch-Wil-split}, (or who think $ER(2)$ is
really $TMF(3)$), 
is the case of $ER(2)^*$.  Looking briefly
at just the final answer, we have
$\mbox{E}_{\infty}^{0,*}$ is
$$
R_2[v_2^{\pm 8}] =  \Z_{(2)}[\hat{v}_1,v_2^{\pm 8}] ,
$$
$$
I_1 R_2[v_2^{\pm 4}] = 2 \Z_{(2)}[\hat{v}_1,v_2^{\pm 4}] 
v_2^2 ,
$$
$$
I_2 R_2[v_2^{\pm 8}] = (2,\hat{v}_1) \Z_{(2)}[\hat{v}_1,v_2^{\pm 8}] 
v_2^4 .
$$

This is already where most of the new interesting stuff happens.
Recall that this is periodic of order 48 on $v_2^8$, so we can
simplify by working with degrees $\Z/(48)$.  In this case, 
$\hat{v}_1 = v_1 v_2^{-(2^2-1)(2^1-1)} = v_1 v_2^{-3}$ and so
is in degree 16 (or -32) and we have always called this element $\alpha$. 

The generators $2 v_2^2$ and $2 v_2^6$ are in degrees -12 and -36 and
go by our names of $\alpha_3$ and $\alpha_1$ respectively.  The more
interesting elements are the $2 v_2^4$ and $\hat{v}_1 v_2^4$ in 
degrees -24 and -8, and known to us as $\alpha_2$ and $w$ respectively.
The interesting part here is that these two elements come from
the first part of
$(2,\hat{v}_1) \Z_{(2)}[\hat{v}_1,v_2^{\pm 8}] $, so that $\alpha \alpha_2
= 2 w$, a relation well-known to us, but easily visible here.

The element $y$ is our $x$ of degree -17.

We see, from the theorem, that for 
$\mbox{E}_{\infty}^{m,*}$, $m > 0$,
we must have $0 < j < i < n+1 = 3$, so $j=1$ and $i=2$, giving us
$I_2 R_2[v_n^{\pm {8}}]v_2^4/I_1$, 
or $\hat{v}_1 \Z/2 [\hat{v}_1,v_n^{\pm 8}]v_2^4$ on $  y^m$ 
 and 
$R_2[v_2^{\pm 8}]/I_1 = 
\Z/2[\hat{v}_1,v_2^{\pm 8}]$ on $y^m $, $m = 1 \mbox{ or } 2$.
Finally, there is the $R_2[v_2^{\pm 8}]/I_2 = \Z/2[v_2^{\pm 8}]$ 
on $y^m$ for $3 \le m < 7$.

\end{remark}

\begin{remark}
As $n$ increases, the complexity of $ER(n)^*$ also increases, but it 
really only does it one step at a time.
The only really new thing each $n$ adds is 
$$
I_n R_n[v_n^{\pm 2^{n+1}}]/I_0 
= (2,\hat{v}_1,\ldots,\hat{v}_{n-1})\Z_{(2)}[\hat{v}_1,
\ldots,\hat{v}_{n-1},v_n^{\pm 2^{n+1}}] v_n^{2^n}.
$$
We saw above how this affected $ER(2)^*$.  Looking just at this
for $n=3$, we have 
$$
I_3 R_3[v_3^{\pm 16}]v_3^8/I_0 = (2,\hat{v}_1,\hat{v}_{2})\Z_{(2)}[\hat{v}_1,
\hat{v}_{2},v_3^{\pm 16}]v_3^8. 
$$
From this we would get 3 new generators, generalizing those we
had in $ER(2)^*$:
$$
A = 2 v_3^8, \quad B = \hat{v}_1 v_3^8, \quad \mbox{ and } 
C = \hat{v}_2 v_3^8.
$$
This leads to all new, but obvious, relations:
$$
\hat{v}_1 A = 2 B, \quad \hat{v}_2 A = 2 C, \quad \hat{v}_2 B = \hat{v}_1 C,
$$
and, of course:
$$
\hat{v}_1 \hat{v}_2 A = 2 \hat{v}_2 B =  2 \hat{v}_1 C.
$$
This is just a sample, but the generators and relations are
easily read off from our description of $ER(n)^*$.
\end{remark}

\begin{remark}
Note that for the final differential we have
$$
d_{2^{n+1}-1} (v_n^{-2^n}) = \hat{v}_n \, y^{2^{n+1}-1} \, v_n^{-2^{n+n}}.
$$
But
$\hat{v}_n = v_n v_n^{-(2^n-1)(2^n-1)}$ so
$$
 \hat{v}_n
 v_n^{ -2^{n+n}}
=
 v_n v_n^{-(2^n-1)(2^n-1)}
 v_n^{-2^{n+n}}
$$
Recall that $v_n^{2^{n+1}}$ is the periodicity element.  
The exponent, modulo $2^{n+2}$, of $v_n$ in $\hat{v}_n$
is just 
$$
1 - (2^n-1)(2^n-1) -2^{n+n} = 1-2^{2n}+2^{n+1} -1 -2^{2n} = 2^{n+1}.
$$
So, $d_{2^{n+1}-1}$ takes the set of generators $v_n^{b2^{n+1}+2^n}$
to
the set
$v_n^{a2^{n+1}}\, y^{2^{n+1}-1}$.
\end{remark}

\begin{proof}[Proof of Theorem \ref{structure}]
The $\mbox{E}_1$ term is $E(n)^*[y]$, or
$
 \Z_{(2)}[y,v_1,v_2, \ldots, v_{n-1},v_n^{\pm 1}].
$
However, this representation is complicated by the fact 
that $d_1(v_k) \ne 0$.
We can replace the $v_k$ with
the equivalent
classes $\hat{v}_k = v_k v_n^{-(2^n-1)(2^k-1)}$, for $k < n$, 
which, by \cite{HK},
are permanent cycles.  Also from \cite{HK}, we use the fact that
$\hat{v}_k v_n^{a2^{k+1}}$ is precisely $y^{2^{k+1}-1}$ torsion,
which gives us the stated differentials. 

Since $d_1$ is determined by $d_1(v_n^{-1}) = 2 y v_n^{-2^n}$ and  
$d_1(v_n^2)=0$, we get $d_1(v_n^{2i-1}) = 2 y v_n^{-2^n+2i}$. It is
now easy to read off the above $\mbox{E}_2$ and this begins our
induction on $k$ by establishing $k=1$.

Assume our description holds
for some $k$, $1 \le k \le n$.  Because $y$ is a permanent cycle, $d_r$
commutes with $y$.

We need to study $d_{2^{k+1}-1}$ on $\mbox{E}_{2^{k+1}-1}^{m,*}$ for
each term of our description of the spectral sequence.

We first observe that the differential is trivial on all of
the $I_i R_n[v_n^{\pm 2^{i+1}}]/I_j$ terms in our answer.  
The image of the 
differential must lie in the group with $m \ge 2^{k+1}-1$, but
this group is always $R_n[v_n^{\pm 2^k}]/I_k$.  Since $i < k$, the image of
$I_i R_n[v_n^{\pm 2^{i+1}}]/I_j$ 
must be zero because every element of the source is
a multiple of an element of $I_i$, and all such elements are
zero in $I_k$.

We now need to compute the differentials on the terms (with various $m$):
$$
R_n[ v_n^{\pm 2^k}]/I_j 
\quad \mbox{ on } \quad  y^m \qquad 0 \le j \le k.
$$
We rewrite these as 
$R_n[ v_n^{\pm 2^{k+1}}]/I_j $ on $1$ and $v_n^{2^k}$.
The differential is trivial on the first part and maps
the second part to the first with a higher $m$ so that
the image is $\hat{v}_k R_n[v_n^{2^{k+1}}]/I_k$.   
Thus the cokernel for $m \ge 2^{k+1}-1$ is 
$R_n[ v_n^{\pm 2^{k+1}}]/I_{k+1}$  and
the
kernel is 
$I_k R_n[ v_n^{\pm 2^{k+1}}]/I_j $ on  $v_n^{2^k}$
as advertised.
Note that when $j=k$ this is trivial.
\end{proof}

\section{Landweber flatness in the bigraded setting}

Let us now identify more structure on this spectral sequence in
the special case above for $X$ a point. 
The spectral sequence is a spectral sequence of modules over
$ER(n)^*$.  Define the map $\psi : E(n)^* \rightarrow ER(n)^*$
by $\psi(v_k) = \hat{v}_k$ for $k \le n$. 
This map multiplies degrees by $(1-\lambda)/2$.
We can now view the
spectral sequence as being over $E(n)^*$.  We want more.  Our
aim in this section is to
show that the Bockstein spectral sequence for a point lives in the 
category of $E(n)_\ast E(n)$-comodules that are finitely 
presented as $E(n)_\ast$-modules. Since $E(n)^*$ is a particularly
nice ring, this last condition is never a problem for us.

Let $BP(n)$ denote the localization of $BP$ given 
by inverting $v_n$, i.e. $BP[v_n^{-1}]$. 
We begin by setting things up as 
$BP(n)_\ast BP(n)$-comodules, but by 
\cite[Theorem C]{HS}, this is equivalent to working with 
$E(n)_\ast E(n)$-comodules.

There is 
however a subtlety with grading we encounter when 
working in this setting. To explain this, let $\mathbb{BP}(n)$ 
denote the 
localization $[v_n^{-1}] \mathbb{BPR}$ obtained from the 
real (RO($\Z/2$))-graded Brown-Peterson spectrum on 
inverting the class $v_n$ in degree $(2^n-1)(1+\alpha)$. 
The (bigraded) homotopy groups of $\mathbb{BP}(n)$ 
and $\mathbb{BP}(n) \wedge \mathbb{BP}(n)$ can be 
computed using the Borel homology spectral sequence, 
and the Atiyah-Hirzebruch spectral sequence respectively. 
In particular, one observes:
\[ \pi_{(\ast, \ast)} (\mathbb{BP}(n) 
\wedge \mathbb{BP}(n)) \simeq \pi_{(\ast, \ast)}
(\mathbb{BP}(n))[t_1, t_2, \ldots, ] [v_n^{-1}] 
\simeq [v_n^{-1}] \pi_{(\ast,\ast)} (\mathbb{BPR})[t_1, t_2, 
\ldots, ] [v_n^{-1}], \]
where all generators are in diagonal bidegree, and the 
left (respectively, right) inverse of $v_n$ denotes the 
inversion of the map given by multiplication by $v_n$ 
on the respective $\mathbb{BPR}$ factor. 
We may multiply any diagonal class $z$ in bidegree $k(1+\alpha)$ 
by a suitable power of an invertible class (see \cite{Nitufib}) 
to represent them by a class $\hat{z}$ in 
bidegree $k(1-\lambda) + 0\alpha$. 
This procedure of normalizing yields a map of rings:
\begin{equation}
\label{comod}
\psi : [v_n]^{-1} BP_\ast [t_1, t_2, \ldots ] [v_n^{-1}] 
\longrightarrow  [\hat{v}_n^{-1}] 
\mathbb{BPR}_{(\ast, 0)} [\hat{t}_1, \hat{t}_2, 
\ldots ] [\hat{v}_n^{-1}]\, =  \, \pi_{(\ast,0)} 
(\mathbb{BP}(n) \wedge \mathbb{BP}(n)), 
\end{equation}
where the left hand side can be identified with the 
Hopf algebroid $BP(n)_\ast BP(n)$. 
Indeed, it is easy to see that $\psi$ is a 
map of Hopf algebroids  \cite[Theorem 4.11]{HK}, though we do not need it here.

The map $\psi$ that scales the grading by $(1-\lambda)/2$ comes
from a composition of maps.  First, it sends the 
generators $v_i \in \pi_\ast BP(n)$ 
to $\hat{v}_i \in \pi_{(\ast,0)} \mathbb{BP}(n)$. 
Then there is the canonical 
map $\pi_{(\ast,0)} \mathbb{BP}(n)  \rightarrow ER(n)_\ast$. 
This explains the unusual degrees in the map $\psi$ and
shows how the 
$ER(n)^*$ module spectral sequence,
$\mbox{E}_r^{*,*}(pt)$,
is also a spectral sequence of $E(n)^*$-modules.

Our goal is to show that each $\mbox{E}_r^{*,*}(pt)$
is also a $BP(n)_\ast BP(n)$-comodule
(equivalently, an $E(n)_*E(n)$-comodule)
that is finitely presented as a $BP(n)_\ast$-module
(equivalently, an $E(n)_*$-module)
and that each differential is a map in this category.

First, we have to describe the $E(n)^*$-module structure,
via $\psi$, of the spectral sequence.  This can be
confusing because the $E_1$ term of the spectral sequence
for a point is free over $E(n)^*$ on the $y^m$.  
At this point it is better to adopt new notation.
This $E(n)^*$-module structure via $\psi$ is equivalent
to being a module over 
$$
\hat{E}(n)^* = \Z_{(2)}[
\hat{v}_1, 
\hat{v}_2,
\ldots,
\hat{v}_n^{\pm 1}]
$$
by way of $\hat{E}(n)^* \rightarrow ER(n)^* \rightarrow E(n)^*$.
Now, 
making $E(n)^*$ an $\hat{E}(n)^*$-module is quite different
from the standard structure.
The primary difference is that 
$$
\psi(v_n) = \hat{v}_n = v_n v_n^{-(2^n-1)^2} = v_n^{-2^{n+1}(2^{n-1}-1)}.
$$
Consequently, $E(n)^*$, as a module over $\hat{E}(n)^*$,  is
a free module on generators 
\begin{equation}
\label{gen}
\{1,v_n,v_n^2,\ldots, v_n^{2^{n+1}(2^{n-1}-1)-1}\}.
\end{equation}
(The $n=1$ case is an anomaly because $\hat{v}_1 = 1$.  In this case
$E(1)^*$ is a free module over $\hat{E}(1)^*$ on an infinite
number of generators, $\{v_1^k\}$, $k \in \Z$.)
Note that we have, as we should have:
$$
\hat{E}(n)^* \otimes_{\hat{E}(n)^*} E(n)^* \simeq E(n)^*.
$$

Along the same lines as above, the rings, $R_n[v_n^{2^k}]$, $k \le n+1$, in
the description and proof of the spectral sequence for a point
are also free over $\hat{E}(n)^*$ on basis elements given by
$$
\{1,v_n^{2^k},v_n^{2(2^k)},v_n^{3(2^k)},\ldots, 
v_n^{(2^{n+1-k}(2^{n-1}-1)-1)(2^k)}\}.
$$
(For $n=1$ the basis is
over $v_1^{j2^k}$ for $j \in \Z$.)
Notice that by the time we are done with the spectral sequence and
we have $k=n+1$, we still have elements left for our basis:
$$
\{1,v_n^{2^{n+1}},v_n^{2(2^{n+1})},v_n^{3(2^{n+1})},
\ldots, v_n^{(2^{n-1}-2)(2^{n+1})}\}
$$
where our periodicity element is $v_n^{2^{n+1}}$, not $\hat{v}_n$,
and if you raise the periodicity element to the $-(2^{n-1}-1)$-th power,
you then get $\hat{v}_n$.
(For $n=1$ the periodicity element is $v_1^4$.  In the case $n=2$,
we do have the periodicity element $v_2^{{8}}= \hat{v}_2^{-1}$.)

We are now ready to put the $BP(n)_* BP(n)$-comodule structure on
the spectral sequence for a point.  We begin with the $\mbox{E}_1$
term, which is free on the $y^m$ over $E(n)^*$.  The element $y$
is odd degree so is unaffected by (co)operations, i.e. it is 
primitive.  The only comodule structure is on $\hat{E}(n)^*$ and only
goes on $E(n)^*$ through it.  $E(n)^*$ is free over $\hat{E}(n)^*$
on generators we have already given in Equation \eqref{gen}. 
As these generators have no connection to the operations, they are
also primitive.
All of the summands in the spectral sequence for
all $\mbox{E}_r(pt)$, sit on primitive elements $v_n^{i} y^m$.
The ideals, $I_j$ are known to be invariant
under the (co)operations, \cite{Land:Ann}, \cite{Land:Ass}, and
for our particular case, \cite{HS}.  

We need all of our module summands to be comodules and all of
our differentials to be maps of comodules.
We have shown that $\mbox{E}_1$ is in our category.  The
first differential is simply multiplication by 2 between
summands so is easily in our category.  Since the map $d_1$
is in the category, $\mbox{E}_2 = \mbox{E}_3$ is also in
our category.

Assume by induction that we have $\mbox{E}_{2^{k+1}-1}$ is
a comodule.  All we have to do is show that $d_{2^{k+1}-1}$
is a comodule map and we automatically have $\mbox{E}_{2^{k+1}}$
is a comodule and our induction will be complete.
However, our 
 $d_{2^{k+1}-1}$ takes a cyclic summand generator to $\hat{v}_k$
in a summand
in $R_n[v_n^{2^{k+1}}]/I_k$ (a free module over $\hat{E}(n)^*/I_k$).
In here, $\hat{v}_k$
is a primitive, 
 \cite{Land:Ann}, \cite{Land:Ass}, and \cite{HS},
so our differentials are in our
category of comodules.

We have now shown that 
 the entire Bockstein 
spectral sequence for a point belongs to the 
category of $BP(n)_\ast BP(n)$-comodules, equivalently, 
$E(n)_*E(n)$-comodules,
that are finitely presented as $BP(n)_\ast$-modules ($E(n)^*$-modules). 
We refer the reader to \cite{HS} for details on this category.

A Landweber flat $BP(n)_\ast$-module is 
defined to be a $BP(n)_\ast$-module that is 
flat on the category of finitely presented 
$BP(n)_\ast BP(n)$-comodules, \cite{Land:Hom} and \cite{HS}. 
Recall that by \cite[Theorem C]{HS}, we may as well 
work with $E(n)$ (as $\hat{E}(n)$) instead of $BP(n)$. 
This leads to the following:

\begin{thm}
\label{ss}
Suppose $M$ is Landweber flat $\hat{E}(n)^\ast$-module, 
and let $(\mbox{E}_\ast,d_\ast)$ denote the 
Bockstein spectral sequence for $X = pt$. 
Then $(M \otimes_{\hat{E}(n)^*} \mbox{E}_*,\id_M 
\otimes_{\hat{E}(n)^*} d_\ast)$ is a spectral sequence 
of $ER(n)^\ast$-modules that 
converges to $M \otimes_{\hat{E}(n)^*} ER(n)^*$.
\end{thm}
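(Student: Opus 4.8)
The plan is to push the functor $M \otimes_{\hat{E}(n)^*}(-)$ through the Bockstein spectral sequence one page at a time. The two facts I would lean on are, first, that every page $\mbox{E}_r(pt)$ and every differential $d_r$ lies in the abelian category $\mathcal{C}$ of finitely presented $\hat{E}(n)_*\hat{E}(n)$-comodules (established just above), and second, the very definition of Landweber flatness: $M \otimes_{\hat{E}(n)^*}(-)$ is an exact functor on $\mathcal{C}$. Granting these, the proof divides into a purely formal part that identifies the pages of the tensored object with $M \otimes \mbox{E}_r$, and a convergence part.

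For the formal part I would argue as follows. Since $\mathcal{C}$ is abelian and $d_r \colon \mbox{E}_r \to \mbox{E}_r$ is a morphism of $\mathcal{C}$, the objects $\ker d_r$, $\operatorname{im} d_r$, $\operatorname{coker} d_r$ and $\mbox{E}_{r+1} = \ker d_r/\operatorname{im} d_r$ all lie in $\mathcal{C}$ and sit in the short exact sequences $0 \to \ker d_r \to \mbox{E}_r \to \operatorname{im} d_r \to 0$, $0 \to \operatorname{im} d_r \to \mbox{E}_r \to \operatorname{coker} d_r \to 0$, and $0 \to \operatorname{im} d_r \to \ker d_r \to \mbox{E}_{r+1} \to 0$ of $\mathcal{C}$. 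Applying the exact functor $M \otimes_{\hat{E}(n)^*}(-)$ to the first two identifies $M \otimes \ker d_r$ with $\ker(\id_M \otimes d_r)$ and $M \otimes \operatorname{im} d_r$ with $\operatorname{im}(\id_M \otimes d_r)$ inside $M \otimes \mbox{E}_r$; applying it to the third then gives $M \otimes \mbox{E}_{r+1} = (M \otimes \ker d_r)/(M \otimes \operatorname{im} d_r) = H(M \otimes \mbox{E}_r,\, \id_M \otimes d_r)$. Thus $(M \otimes \mbox{E}_*,\, \id_M \otimes d_*)$ is a spectral sequence whose $r$-th page is $M \otimes \mbox{E}_r$. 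Its $ER(n)^*$-module structure and the $ER(n)^*$-linearity of $\id_M \otimes d_r$ are inherited from the second tensor factor, using that the $\mbox{E}_r$ are $ER(n)^*$-modules with $ER(n)^*$-linear differentials by Theorem \ref{bss}, and recalling that the $\hat{E}(n)^*$-module structure used to form $\otimes_{\hat{E}(n)^*}$ is the one coming through $\hat{E}(n)^* \to ER(n)^*$.

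For convergence I would use finiteness twice. First, by Theorem \ref{structure}(iii) the original spectral sequence collapses at page $2^{n+1}$, so $M \otimes \mbox{E}_r = M \otimes \mbox{E}_\infty$ for every $r \ge 2^{n+1}$ and the tensored spectral sequence collapses at the same page with $\mbox{E}_\infty$-term $M \otimes \mbox{E}_\infty$. Second, by Theorem \ref{bss}(iv) the original spectral sequence converges to $ER(n)^*$ via the finite filtration $ER(n)^* = M_0 \supset M_1 \supset \cdots \supset M_{2^{n+1}-1} = 0$ with $M_i/M_{i+1} \simeq \mbox{E}_\infty^{i,*}$. Each $\mbox{E}_\infty^{i,*}$ lies in $\mathcal{C}$, and a Landweber flat module is $\operatorname{Tor}$-acyclic against objects of $\mathcal{C}$: by the Landweber filtration theorem this reduces to $\operatorname{Tor}_{>0}^{\hat{E}(n)^*}(M, \hat{E}(n)^*/I_k) = 0$ for $k \le n$, which holds because $(2, \hat{v}_1, \ldots, \hat{v}_{k-1})$ is a regular sequence on $\hat{E}(n)^*$ with primitive images, so Landweber flatness forces this sequence to be regular on $M$ and the Koszul computation of $\operatorname{Tor}^{\hat{E}(n)^*}(M, \hat{E}(n)^*/I_k)$ vanishes in positive degrees. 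Feeding $\operatorname{Tor}_1^{\hat{E}(n)^*}(M, \mbox{E}_\infty^{i,*}) = 0$ into the long exact $\operatorname{Tor}$-sequence of $0 \to M_{i+1} \to M_i \to \mbox{E}_\infty^{i,*} \to 0$ and inducting downward from $M \otimes M_{2^{n+1}-1} = 0$, I obtain short exact sequences $0 \to M \otimes M_{i+1} \to M \otimes M_i \to M \otimes \mbox{E}_\infty^{i,*} \to 0$; in particular each $M \otimes M_i \to M \otimes M_0 = M \otimes_{\hat{E}(n)^*} ER(n)^*$ is injective, so $\{M \otimes M_i\}$ is a finite filtration of $M \otimes_{\hat{E}(n)^*} ER(n)^*$ with associated graded $M \otimes \mbox{E}_\infty$. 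Hence the tensored spectral sequence converges to $M \otimes_{\hat{E}(n)^*} ER(n)^*$.

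The step I expect to be the main obstacle is this last one. The formal part uses only exactness of $M \otimes(-)$ on short exact sequences of finitely presented comodules, but convergence requires the genuinely stronger statement that $M$ is $\operatorname{Tor}$-acyclic against all of $\mathcal{C}$, so that the filtration $\{M_i\}$ is not collapsed upon tensoring. Supplying that is precisely where one must cash in what Landweber flatness means, via the Landweber filtration theorem together with the regularity and primitivity of the classes $2, \hat{v}_1, \ldots, \hat{v}_{n-1}$ — the same input already invoked in the previous section to place the spectral sequence in $\mathcal{C}$. An alternative that sidesteps $\operatorname{Tor}$ altogether would be to verify directly, from the explicit description in Theorem \ref{structure}, that each $M_i = x^i ER(n)^*$ is a finitely presented subcomodule of $ER(n)^*$; this is believable but noticeably more laborious.
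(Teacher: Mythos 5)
Your proof is correct, and it is essentially a careful expansion of what the paper treats as immediate. The paper gives no explicit argument for Theorem~\ref{ss}: it simply records (a) that every page $\mbox{E}_r(pt)$ and every $d_r$ lies in the category $\mathcal{C}$ of $E(n)_*E(n)$-comodules finitely presented over $\hat{E}(n)^*$, and (b) that Landweber flatness means exactness of $M\otimes_{\hat{E}(n)^*}(-)$ on $\mathcal{C}$, and then states the theorem as a consequence. Your formal part --- running the exact functor through $\ker d_r$, $\operatorname{im} d_r$, $\mbox{E}_{r+1}$ page by page and carrying the $ER(n)^*$-module structure (which comes from Theorem~\ref{bss}(i) and the $c$-invariance of the image of $ER(n)^*$ in $E(n)^*$) --- is exactly the reasoning the paper leaves implicit.

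The place where you genuinely add content is the convergence step, and your instinct that this is the real issue is right. A naive reading of (a) and (b) would want to apply exactness directly to the short exact sequences $0\to M_{i+1}\to M_i\to \mbox{E}_\infty^{i,*}\to 0$, but that requires the $M_i = x^i ER(n)^*$ to be objects of $\mathcal{C}$, which is not established in the paper and is not obvious (they are not subquotients of $E(n)^*(X)$ in any visible way). Your route --- proving $\operatorname{Tor}^{\hat{E}(n)^*}_{>0}(M,N)=0$ for all $N\in\mathcal{C}$ via the Landweber filtration theorem reducing to $\hat{E}(n)^*/I_k$, then the Koszul computation using regularity of $(2,\hat{v}_1,\ldots,\hat{v}_{k-1})$ on both $\hat{E}(n)^*$ and $M$ --- sidesteps the question of whether $M_i$ is a comodule, since the long exact $\operatorname{Tor}$ sequence only needs $\mbox{E}_\infty^{i,*}\in\mathcal{C}$. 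This is a cleaner and more rigorous closing of the gap than what a literal reading of the paper supplies, and buys you the injectivity of $M\otimes M_{i}\to M\otimes ER(n)^*$ that convergence actually requires.
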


\section{$ER(n)^\ast(BO(q))$:}

In this section, we apply the results of previous sections to 
compute the $ER(n)$ cohomology of $BO(q)$. 
The key ingredients are the fact that $E(n)^*(BO(q))$ is 
Landweber flat, and that $\ER(n)$ is a 
Real oriented theory in the sense that the universal 
complex line bundle over $\C P^\infty$ admits an $\ER(n)$-theory 
Thom class. The standard Atiyah-Hirzebruch spectral 
sequence argument gives $\ER(n)$-theory Conner-Floyd 
Chern classes, which we will identify as permanent cycles in the 
Bockstein spectral sequence for $BO(q)$.

Let $\mathbb{ER}(n)$ denote the real oriented 
Johnson-Wilson theory. Let $BU(q)$ be endowed with the 
canonical involution given by complex conjugation. 
By the Atiyah-Hirzebruch spectral sequence we know that \cite{HK}
\[ \mathbb{ER}(n)^{\ast,\ast}(BU(q)) = 
\mathbb{ER}(n)^{\ast,\ast}[[{c}_1, \ldots, {c}_q]], \]
where ${c}_k$ is the (real) Conner-Floyd 
Chern classes in bidegree $k(1+\alpha)$. 
These classes lift the usual Chern classes $c_k$ under the 
forgetful map $\mathbb{ER}(n)^{a,b}(BU(q)) \longrightarrow 
E(n)^{a+b}(BU(q))$. 
Now recall from \cite{Nitufib}
that there is an invertible class $y(n) \in 
\mathbb{ER}(n)^{-\lambda,-1}$. The class ${c}_k \, y(n)^k$ 
is therefore a class in bidegree $k(1-\lambda) + 0\alpha$. 
Let $\hat{c}_k \in 
\mathbb{ER}(n)^{k(1-\lambda)}(BU(q))$ be this class. 
Notice that the $\hat{c}_k$ are lifts of the 
classes $c_k v_n^{k(2^n-1)} \in E(n)^*(BU(q))$. 
Now consider the restriction map along homotopy fixed points:
\[ \mathbb{ER}(n)^{(\ast, 0)}(BU(q)) 
\longrightarrow ER(n)^\ast(BO(q)). \]
This yields classes by the same name: $\hat{c}_k \in 
ER(n)^\ast(BO(q))$ that lift the restriction 
of $c_k v_n^{k(2^n-1)}$ to $E(n)^\ast(BO(q))$. 
In particular, we know that the classes $\hat{c}_k$ are 
permanent cycles in the Bockstein spectral sequence 
converging to $ER(n)^\ast(BO(q))$.

We may choose the classes $\hat{c}_k$ as replacements for the 
Chern classes and state the following old result 
(see \cite{WSW:BO}, 
\cite{KY}, \cite{Kriz}, and \cite{RWY}):

$
E(n)^\ast(BO(q)) \simeq  
$
$$
 E(n)^\ast[[c_1, \ldots, c_q]]/ 
( c_1 - c_1^\ast, \ldots,
 c_q - c_q^\ast )
= E(n)^\ast[[\hat{c}_1, 
\ldots, \hat{c}_q]]/ 
(\hat{c}_1 - \hat{c}_1^\ast , \ldots,
\hat{c}_q - \hat{c}_q^\ast ),
$$

where $\hat{c}_k^\ast$ are the conjugate 
Chern classes defined formally by the equality:
\[ \sum_{i=0}^\infty \hat{c}_i^\ast = \prod_{i=1}^\infty 
(1+ [-_{\hat F} x_i]), \quad \mbox{with} \quad 
\sigma_k(x_1, x_2, \ldots,) = \hat{c}_k\]
and with the 2-typical formal group law $\hat{F}$ 
defined over the ring $\Z_{(2)}[\hat{v}_1, \ldots ]$ via
\[ [2]_{\hat F}(u) = \sum_{i=0}^\infty{}^{\hat F} \hat{v}_i u^{2^i}. \]

\begin{defn}
Any $E(n)^*$-module, $M$, concentrated in even degrees, is 
also an $\hat{E}(n)^*$-module by the inclusion $\hat{E}(n)^*
\rightarrow E(n)^*$.
We define the $\hat{E}(n)^*$ module $\hat{M}$ to be all elements of
$M$ in degrees that are integral multiples of $2^{n+2}(2^{n-1}-1)$.
It is pretty crucial to note that the degree of $\hat{v}_k$ is
in such a degree since it is in degree $(2^k-1)2^{n+2}(2^{n-1}-1)$.
As a result, it will shortly turn out that $\hat{E}(n)^*$ is $\hat{M}$
for $M = E(n)^*$.
We need to account for all of the other elements in $M$, and
we claim that we have, following Equation \eqref{gen}, 
as an $\hat{E}(n)^*$-module,
$$
M \simeq \bigoplus_{j=0}^{2^{n+1}(2^{n-1}-1)-1} \hat{M} v_n^j.
$$
To see this, first observe that $v_n^{2^{n+1}(2^{n-1}-1)} = \hat{v}_n^{-1}$.
We want to show that the $v_n^j$ above hit every even degree 
from $0$ to $2^{n+2}(2^{n-1}-1)-1$, modulo 
$2^{n+2}(2^{n-1}-1)$.
It is enough to show that the only common divisor of the degree
of $v_n$, i.e.  $-2(2^n-1)$, and 
$2^{n+2}(2^{n-1}-1)$,
is $2$.
Divide both by $2$ and we see that $2$ does not divide $2^n-1$,
so this is just a question of $2^n-1$ and $2^{n-1}-1$ being
relatively prime.  We know that 
$(2^n-1) -
2(2^{n-1}-1)  
= 1$, so this is true.

For notational purposes,
in the case where our module is $E(n)^*(X)$,
we denote this by $\hat{E}(n)^*(X)$ without implying this is
a cohomology theory.

We now have 
$$
M \simeq \hat{M}\otimes_{\hat{E}(n)^*} E(n)^*,
$$
in particular, when $M = E(n)^*(X)$ or $E(n)^*$.
\end{defn}

\begin{remark}
\label{hatflat}
$\hat{E}(n)^*$ has a comodule structure on it from Equation \eqref{comod}.
It is identical to the usual comodule structure on $E(n)^*$.
If $M$ is Landweber flat as an $E(n)^*$-module, we claim that $\hat{M}$ is 
Landweber flat as an $\hat{E}(n)^*$-module.
Recall that Landweber flat just means that $v_k$ is injective
on $M/I_k M$ for all $k$.  In our case, because $v_n$ is a unit,
multiplication by $\hat{v}_k$ is equivalent to multiplication by
$v_k$ until we have $M/I_n M$.  This is just free over $K(n)^*
= \Z/2[v_n^{\pm 1}]$, which, in turn, is free over $\Z/2[\hat{v}_n^{\pm 1}]$.
Multiplication here by either $v_n$ or $\hat{v}_n$ is injective
and $M/I_{n+1} M = 0$, so if $M$ is Landweber flat over $E(n)^*$,
$\hat{M}$ is Landweber flat over $\hat{E}(n)^*$ just by restricting
to the degrees that define $\hat{M}$.
\end{remark}

We are now ready to prove Theorem \ref{main}.

\begin{proof}

Our subset $\hat{E}(n)^*(BO(q))$  of $E(n)^*(BO(q))$ consists
of
permanent cycles in the
Bockstein spectral sequence. 
We may re-express $E(n)^\ast(BO(q))$ as:
\begin{equation}
\label{ebo}
E(n)^\ast(BO(q)) \simeq (\hat{E}(n)^\ast[[\hat{c}_1, 
\ldots, \hat{c}_q]]/
(\hat{c}_1 - \hat{c}_1^\ast , \ldots,
\hat{c}_q - \hat{c}_q^\ast )
) \otimes_{\hat{E}(n)^*} E(n)^\ast. 
\end{equation}

Writing $E(n)^\ast(BO(q))$ as a tensor product in this 
manner has the advantage of expressing it in terms of 
permanent cycles in the Bockstein spectral sequence 
for $X = BO(q)$, extended by the coefficients $E(n)^\ast$. 
We know that 
$E(n)^\ast[[\hat{c}_1, \ldots, 
\hat{c}_q]]/
(\hat{c}_1 - \hat{c}_1^\ast , \ldots,
\hat{c}_q - \hat{c}_q^\ast )
$ 
is 
Landweber flat, since it can be identified with $E(n)^\ast(BO(q))$. 
This was first stated explicitly in \cite{KY}.
It also follows by combining \cite{Kitch-Wil-KnBOq} and \cite{RWY}.
By Remark \ref{hatflat}, we see that
$\hat{E}(n)^\ast[[\hat{c}_1, \ldots, 
\hat{c}_q]]/
(\hat{c}_1 - \hat{c}_1^\ast , \ldots,
\hat{c}_q - \hat{c}_q^\ast )
$ 
is also Landweber flat.
Invoking Theorem \ref{ss}, 
one has a canonical map of spectral sequences:
$$
 \varphi : (\hat{E}(n)^\ast[[\hat{c}_1, \ldots, \hat{c}_q]]/
(\hat{c}_1 - \hat{c}_1^\ast , \ldots,
\hat{c}_q - \hat{c}_q^\ast ))
\otimes_{\hat{E}(n)^*} (\mbox{E}_\ast, d_\ast) 
\longrightarrow (\mbox{E}_\ast(BO(q)), d_\ast), 
$$
where, as mentioned earlier, the left hand side is the 
Bockstein spectral sequence for a point tensored with the 
ring of permanent cycles: 
$\hat{E}(n)^\ast[[\hat{c}_1, 
\ldots, \hat{c}_q]]/
(\hat{c}_1 - \hat{c}_1^\ast , \ldots,
\hat{c}_q - \hat{c}_q^\ast )
$, 
and the right hand side is the 
Bockstein spectral sequence for $X = BO(q)$. 

Since the map $\varphi$ is manifestly an isomorphism 
at the $\mbox{E}_1$-term by Equation \eqref{ebo}, we conclude that 

$ER(n)^\ast(BO(q)) \simeq$
\begin{multline*}
\qquad (\hat{E}(n)^\ast[[\hat{c}_1, \ldots, 
\hat{c}_q]]/
(\hat{c}_1 - \hat{c}_1^\ast  , \ldots,
\hat{c}_q - \hat{c}_q^\ast  ))
\otimes_{\hat{E}(n)^*} ER(n)^\ast \simeq \\
ER(n)^\ast[[\hat{c}_1, \ldots, 
\hat{c}_k]]/ 
(\hat{c}_1 - \hat{c}_1^\ast  , \ldots,
\hat{c}_q - \hat{c}_q^\ast  ).
\qquad
\end{multline*}
\end{proof}

\section{Orientation of Bundles:}

\noindent
Let $\xi : V \rightarrow B$ be a real vector bundle that 
supports an $E(n)$-orientation. To see if $\xi$ is 
orientable in $ER(n)$, we may run the Bockstein spectral 
sequence on the Thom space $B^\xi$, and ask if there is a 
Thom class in the $\mbox{E}_1(B^\xi)$-term that survives 
to $\mbox{E}_{2^{n+1}}(B^\xi)$.

Let us work in the universal (virtual) bundle $\xi$ over $B = BO$. 
This bundle is not $E(n)$-orientable. 
However, twice it is indeed $E(n)$-orientable 
since $2\xi = \xi \otimes \C$. Another way to say this is that 
given the multiplication by $2$- map $[2] : BO \longrightarrow 
BO$, the pullback bundle $[2]^\ast \xi$ is $E(n)$-orientable. 

\begin{defn}
Define the bundle $2^k \xi$ over $BO$ to be the pullback of the 
universal bundle $\xi$ along the multiplication 
by $2^k$-map $[2^k] : BO \longrightarrow BO$. Let $MO[2^k]$ 
denote the Thom spectrum of the bundle $2^k\xi$.
\end{defn}

Let us prove an elementary result about $MO[2]$. 

\begin{prop}
The Thom spectrum $MO[2]$ supports an $E(n)$-orientation. 
Furthermore, it supports a Thom class $\mu \in E(n)^0(MO[2])$ 
with the property that $c(\mu) = \mu$, where $c$ is the 
involution induced by complex conjugation on $E(n)$. 
\end{prop}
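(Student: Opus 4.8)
The plan is to construct the Thom class by complexification, and to deduce the conjugation symmetry from the fact that $2\xi$ is a complexification, hence is canonically isomorphic to its own conjugate bundle.

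First I would note that $2\xi = \xi\otimes_{\R}\C$ is the complexification of the universal bundle: the classifying map $[2]\colon BO\to BO$ of $2\xi = \xi\oplus\xi$ factors as $BO\xrightarrow{c}BU\xrightarrow{r}BO$ through the complexification $c$, and $2\xi\cong(c^{\ast}\gamma)_{\R}$ for the universal complex bundle $\gamma$ over $BU$. Thomifying $c$ gives a map of (homotopy-ring) spectra $g\colon MO[2]\to MU$, and composing with a complex orientation $or\colon MU\to E(n)$ produces the class $\mu\in E(n)^0(MO[2])$, the image of the universal $MU$-Thom class. Transporting the $MU$-Thom isomorphism along $g$ shows that $\mu$ is a Thom class, so $MO[2]$ is $E(n)$-orientable; taking $or$ and $g$ to be ring maps makes $\mu$ a multiplicative orientation, which is the form needed for Theorem~\ref{orient}. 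This step is entirely formal.

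For $c(\mu)=\mu$ I would argue as follows. The $c$-action on $E(n)^0(MO[2]) = [MO[2],E(n)]$ is post-composition with the conjugation involution $c_{E(n)}$, so $c(\mu)$ is represented by $c_{E(n)}\circ or\circ g$. By the $\Z/2$-equivariance of the complex orientation (Hu--Kriz; recalled in the Introduction), $c_{E(n)}\circ or\simeq or\circ\kappa_{MU}$ for $\kappa_{MU}\colon MU\to MU$ the conjugation self-map, so $c(\mu)$ is represented by $or\circ(\kappa_{MU}\circ g)$. Now $\kappa_{MU}\circ g\colon MO[2]\to MU$ is the map classifying the complex bundle $\overline{c^{\ast}\gamma} = \overline{2\xi}$ (it carries the universal $MU$-Thom class to the complex Thom class of $\overline{2\xi}$), whereas $g$ classifies $2\xi$; and the tautological real structure on $2\xi = \xi\oplus i\xi$ furnishes a $\C$-linear isomorphism $\phi\colon 2\xi\xrightarrow{\ \cong\ }\overline{2\xi}$, $v\otimes z\mapsto v\otimes\bar z$, compatible with the stabilization maps. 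Since the map to $MU$ classifying a complex bundle depends only on its isomorphism class, $\kappa_{MU}\circ g\simeq g$, and therefore $c(\mu)=\mu$.

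The step requiring the most care is upgrading $\phi$ to an actual homotopy $\kappa_{MU}\circ g\simeq g$ of maps of spectra (not merely of the underlying classifying maps $BO\to BU$) while making sure that no sign $(-1)^{\dim}$ intervenes: over $BO(q)$ the underlying real automorphism $\phi_{\R} = 1\oplus(-1)$ has fibrewise degree $(-1)^q$, so the comparison of complex Thom classes looks off by $(-1)^q$ on the fibre $S^{2q}$ --- but this is exactly compensated by the fact that $c_{E(n)}$ multiplies the canonical class of $\widetilde{E(n)}^{2q}(S^{2q})$ by $(-1)^q$ (the conjugate orientation of a rank-$q$ trivial complex bundle is $(-1)^q$ times the standard one), so no inconsistency arises; over $BO$ itself the relevant virtual bundle has dimension $0$ and no sign is present. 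The cleanest way to package this is to pass to the Real world: $\xi\otimes_{\R}\C$ over $BO$, with fibrewise conjugation, is a Real vector bundle in Atiyah's sense (trivial $\Z/2$-action on the base), so $g$ underlies a $\Z/2$-equivariant map of genuine $\Z/2$-spectra $\mathbb{MO}[2]\to M\mathbb{R}$ and $\mu$ underlies the equivariant $\mathbb{ER}(n)$-Thom class obtained by composing with the Real orientation $M\mathbb{R}\to\mathbb{ER}(n)$; a class represented by the underlying map of an equivariant map is automatically fixed by the residual $c$-action, which yields $c(\mu)=\mu$ directly.
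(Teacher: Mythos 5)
Your construction of the Thom class is the same as the paper's: pull back the canonical $E(n)$-Thom class of $MU$ along the map $MO[2]\to MU$ given by viewing $2\xi$ as the complexification $\xi\otimes_{\R}\C$. Your key idea for the conjugation-invariance is also the same as the paper's, just phrased differently: you use the tautological isomorphism $2\xi\cong\overline{2\xi}$, whereas the paper uses the equivalent fact that $c_i=c_i^\ast$ in $E(n)^\ast(BO(2k))$.

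However, the step you yourself flag as ``requiring the most care'' --- the sign bookkeeping --- is where I think your argument breaks down. Your proposed compensation rests on the claim that $c_{E(n)}$ multiplies the canonical class of $\widetilde{E(n)}^{2q}(S^{2q})$ by $(-1)^q$. That is not correct with the convention actually in play: $c$ is a degree-preserving ring involution of the spectrum $E(n)$ (acting on $E(n)^\ast(X)$ by post-composition, with $X$ a space with trivial action), and the suspension isomorphism $\widetilde{E(n)}^{2q}(S^{2q})\cong E(n)^0=\Z_{(2)}$ is $c$-equivariant; so $c$ fixes the canonical generator. The $(-1)^q$ you want to cancel against really comes from viewing $S^{2q}$ as $S^{q(1+\alpha)}$ in the $RO(\Z/2)$-graded world; once you translate back to integer grading you must track exactly that sign, and ``it disappears over $BO$ because the virtual rank is zero'' is not a proof --- the fibrewise maps $Th(\phi_q)$, of degree $(-1)^q$, do not assemble to the identity of $MO[2]$ without making this $\alpha$-versus-$1$ distinction precise, and your last paragraph only sketches that. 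The paper sidesteps all of this with a simple device: work only over $MU(2k)$ and $BO(2k)$ for even complex rank $2k$, where the ambient $(-1)^{2k}$ is $+1$, compute directly from the formal group law that $c(\mu_{2k})=\mu_{2k}\,c_{2k}^\ast/c_{2k}$, restrict to $BO(2k)$ where $c_{2k}=c_{2k}^\ast$, and then pass to the inverse limit. I'd suggest either adopting that even-rank reduction, or genuinely carrying out the Real/genuine-equivariant argument you sketch at the end (including the comparison of the $\alpha$- and $1$-suspensions), rather than relying on the informal cancellation of two $(-1)^q$'s that live in different worlds.
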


\begin{proof}
Recall from earlier remarks that $MO[2]$ can be identified with 
Thom spectrum of the virtual (complex) vector bundle given by 
restricting the universal bundle over $BU$ along $BO$. 
In particular, one has a map of Thom spectra $i : 
MO[2] \longrightarrow MU$. Since $MU$ supports a 
canonical $E(n)$-orientation given by a 
Thom class $\mu \in E(n)^0(MU)$, one obtains a 
Thom class by restriction (and denoted by the same 
symbol) $\mu \in E(n)^0(MO[2])$. 
It remains to show that $c(\mu) = \mu$ in $E(n)^0(MO[2])$. 
We proceed as follows:

Consider the real orientation of $E(n)$ 
given by a $\Z/2$-equivariant map: 
\begin{equation} 
\label{leftright}
\mu_1 : MU(1) \longrightarrow \Sigma^{(1+\alpha)} E(n). 
\end{equation} 
The action of complex conjugation on $\Sigma^{1+\alpha} 
E(n)$ can be identified with $-c$ (the $c$ from $E(n)$
and the $-1$ from the orientation reversing action on the
two sphere).

On the other hand, complex conjugation on $MU(1)$ is induced 
by the (complex anti-linear) self map of the universal line 
bundle $\gamma_1$ over $BU(1)$ that sends a vector to its 
complex conjugate. This map can be seen as a (complex linear) 
isomorphism from $\overline{\gamma_1}$ to $\gamma_1$, 
where $\overline{\gamma_1}$ is the opposite complex 
structure on the real bundle underlying $\gamma_1$. 
Since $\overline{\gamma}_1$ is isomorphic to the dual 
bundle $\gamma_1^*$, we see that the action of complex 
conjugation on $MU(1)$ sends the Thom class $\mu_1 \in 
E(n)^2(MU(1))$ to the class $[-_F \, \mu_1]$, where $F$ is the 
formal group law for $E(n)$. 
$MU(1)$ and $BU(1)$ are homotopy equivalent and the Thom
isomorphism is an $E(n)^*(BU(1))$ module map so
that $\mu_1^2 = \mu_1 \, x$, 
where $x = c_1(\gamma_1)$. 

From this we have $\mu_1^k = \mu_1 x^{k-1}$, so if we have
any power series  $\sum a_i \mu_1^{i+1}$,
we can rewrite this as $\mu_1 \, \sum a_i x^i$.
Hence $[-_F  \,\mu_1]$ = $\mu_1 \frac{[-_F \, x]}{x}$.

Incorporating this observation into the $\Z/2$ 
equivariance of $\mu_1$ from Equation \eqref{leftright}
by computing on the left and the right, this
translates to the equality: 
$$
\mu_1 \frac{[-_F  \,x]}{x} 
= 
 - c(\mu_1). 
$$

Let $\mu_1^{2k} : MU(1)^{\wedge 2k} \longrightarrow 
\Sigma^{2k(1+\alpha)} E(n)$ denote the external smash 
product of $2k$-copies of $\mu_1$. It follows that:
\[ c(\mu_1^{2k}) = \mu_1^{2k} \prod_{i=1}^{2k} 
\frac{[-_F \, x_i]}{x_i} = \mu_1^{2k} \, 
\frac{\sigma_{2k}(-_F \, x_1, -_F \, x_2, 
\ldots, -_F \, x_{2k})}{\sigma_{2k}(x_1, x_2, \ldots, x_{2k})}, \]
where $\sigma_{2k}$ denotes the $2k$-th elementary 
symmetric function in $2k$-variables. It follows 
from this that the canonical orientation $\mu_{2k} : 
MU(2k) \longrightarrow \Sigma^{2k(1+\alpha)} E(n)$ has the property:
\[ c(\mu_{2k}) = \mu_{2k} \, \frac{c_{2k}^*}{c_{2k}}, \]
where $c_{i}$ are the Conner-Floyd Chern classes with 
conjugates denoted by $c_i^*$. Now let $MO(2k)[2]$ denote the 
Thom spectrum of the bundle over $BO(2k)$ given by 
restriction along $BO(2k) \longrightarrow BU(2k)$. 
Let $\mu_{2k} \in E(n)^{4k}(MO(2k)[2])$ also 
denote the restriction of $\mu_{2k} \in E(n)^{4k}(MU(2k))$. 
Since $c_i = c_i^*$ in $E(n)^*(BO(2k))$, we 
observe that $c(\mu_{2k}) = \mu_{2k}$ in $E(n)^{4k}(MO(2k)[2])$. 
Taking inverse limits over $k$, we deduce 
that $c(\mu) = \mu$ in $E(n)^0(MO[2])$. 
\end{proof}

We can now prove the first part of Theorem \ref{orient}.

\begin{proof}

The spectral sequence for a Thom space is a module over the
spectral sequence for $BO$.  The only new concern is what
happens with the Thom class.  If $d_r = 0$ on the Thom class,
then $\mbox{E}_{r+1}$ for the Thom space
is a rank one free module over $\mbox{E}_{r+1}(BO)$ on the Thom class.

Consider the Bockstein spectral sequence $\mbox{E}_\ast(MO[2])$ 
converging to $ER(n)^\ast(MO[2])$. 
By the previous Proposition, we see that $\mbox{E}_1(MO[2])$ is a rank one 
module over $\mbox{E}_1(BO)$ generated by the complex orientation $\mu$
for $MO[2]$. Furthermore, we know that $c(\mu) = \mu$. 
It follows that $d_1(\mu)= y \, v_n^{-(2^n-1)}(1-c)(\mu) = 0$. 
This gives us $\mbox{E}_2$, but to begin our induction, we
need to have $d_2 = 0 $ on the Thom class.
$d_2 (\mu)$ has to hit some element $zy^2$ 
with $z \in \mbox{E}_2^{0,p}(BO)$ where $p=2\lambda+1 = 2^{2n+2}-2^{n+2}+5$,
but there are no odd degree elements.  We have 
$\mbox{E}_3(MO[2])$ is a rank one 
module over $\mbox{E}_3(BO)$ generated by the complex orientation $\mu$.

Assume by induction that for $k > 1$, $\mbox{E}_{2^{k}-1}(MO[2^{k-1}])$ is a 
rank one module over $\mbox{E}_{2^{k}-1}(BO)$ on a distinguished 
generator $u_{k-1}$. 
Writing $2^{k}\xi$ as $2(2^{k-1}\xi)$, we have a factorization:
\[ [2^{k}] = ([2^{k-1}] + [2^{k-1}]) \circ \Delta : 
BO \longrightarrow BO \times BO \longrightarrow BO, \]
which induces a map of Thom spectra:
\[ \tau = \mu \circ \Delta : MO[2^{k}] \longrightarrow MO[2^{k-1}] 
\wedge MO[2^{k-1}] \longrightarrow MO[2^{k-1}]. \]
This induces a map of spectral sequences $\tau^\ast : 
\mbox{E}_{2^{k}-1}(MO[2^{k-1}]) \longrightarrow 
\mbox{E}_{2^{k}-1}(MO[2^{k}])$. 
Define $u_{k}$ to be $\tau^\ast(u_{k-1})$. 
Observe that $c(u_k)=u_k$ by naturality and induction from $u_{k-1}$.
This is needed for 
the following equation using Theorem \ref{bss}, {\em (vi)}:
\[ d_{2^{k}-1}(u_{k}) = \Delta^\ast \mu^\ast 
d_{2^{k}-1}(u_k) = \Delta^\ast d_{2^{k}-1} \mu^\ast (u_k) = 
\Delta^\ast (d_{2^{k}-1}(u_{k-1}) \wedge u_{k-1} + u_{k-1} 
\wedge d_{2^{k}-1}(u_{k-1})). \]
But notice that $\Delta : MO[2^{k}] 
\longrightarrow MO[2^{k-1}] \wedge MO[2^{k-1}]$ is invariant 
under the swap map on $MO[2^{k-1}] \wedge MO[2^{k-1}]$. 
Therefore $\Delta^\ast (d_{2^{k}-1}(u_{k-1}) \wedge u_{k-1}) = 
\Delta^\ast (u_{k-1} \wedge d_{2^{k}-1}(u_{k-1}))$. 
It follows that $d_{2^{k}-1}(u_{k})$ is a 
multiple of $2$ and must consequently be zero 
since $\mbox{E}_{2^{k}-1}(MO[2^{k}])$ is a $\Z/2$-module 
for external degrees greater than zero \footnote{The next claim shows that $d_{2^k-1}(u_{k-1})$ is in fact non-trivial}. It follows that $u_{k}$ survives 
to $\mbox{E}_{2^{k}}(MO[2^{k}])$.

We need to show that $d_{2^{k}+r}(u_{k})=0$ by induction
on $r$, for $0 \le r < 2^{k}-1$ to complete the induction
on $k$.
$d_{2^{k}+r}(u_{k})=z y^{2^{k}+r}$ by induction
where $|z| = (2^k + r) \lambda + 1$, $z \in \mbox{E}_{2^k+r}^{0,|z|}$
(from Theorem \ref{bss} {\em (ii)}).
Now by the flatness of $E(n)^\ast(BO)$, we have the 
Thom isomorphism:
$$
\mbox{E}_{2^{k}+r}(MO[2^{k}]) = \mbox{E}_{2^{k}+r}(pt) 
\otimes_{\hat{E}(n)^\ast} \hat{E}(n)^\ast(BO). 
$$
From Corollary \ref{cor}, Theorems \ref{structure} and \ref{ss}, 
we know the only 
elements to survive to sit on 
$ y^{2^{k}+r}$ are elements from $BO$ and $R_n/I_k$ sitting on
$v_n^{a2^k}$.  The degrees of the $c_k$ are divisible by $2^{n+2}$
and the degree of elements in $R_n/I_k$ sitting on $v_n^{a2^k}$ are
divisible by $2^{k+1}$ because $v_n^{a2^k}$ is and every element
in $R_n/I_k$ is made up of multiples of elements $\hat{v}_j$ with
$j \ge k$, and all of them have degree divisible by $2^{n+2}$.  
Modulo
$2^{k+1}$, $|z| = 2^k + r + 1$, which is less than $2^{k+1} $,
so $z$ must be zero as it is not in a degree with any elements.

The proof of the first part of Theorem \ref{orient} is 
complete on observing that the 
spectral sequence collapses at $E_{2^{n+1}}$. 
\end{proof}

The last part of Theorem \ref{orient} is complete with the following.

\begin{claim}
There exists a vector bundle $\zeta$ such that $2^n \zeta$ is not $ER(n)$-orientable. In particular, the last possible differential $d_{2^{n+1}-1}$ in the Bockstein spectral sequence converging to $ER(n)^*(MO[2^n])$ is nontrivial on the generator $u_n$. 
\end{claim}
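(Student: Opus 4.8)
The plan is to exhibit an explicit bundle whose $2^n$-fold multiple carries a nontrivial last differential on the Thom class, and then to recognize that the universal calculation forces this to be $u_n$.

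First I would take $\zeta$ to be the universal line bundle $\gamma_1$ over $BO(1) = \R P^\infty$ (or a high-dimensional approximation thereof), so that $MO(1)[2^n]$ is the Thom spectrum of $2^n\gamma_1$, a complex bundle of complex dimension $2^{n-1}$. The point of restricting to $BO(1)$ is that $ER(n)^*(BO(1))$ — equivalently the Bockstein spectral sequence for $\R P^\infty$ — is small and completely understood from the authors' earlier work; in particular the relevant differentials are already recorded there. I would run the Bockstein spectral sequence for $MO(1)[2^n]$ as a module over the spectral sequence for $BO(1)$, using the Thom isomorphism $\mbox{E}_r(MO(1)[2^n]) \simeq \mbox{E}_r(pt) \otimes_{\hat{E}(n)^*} \hat{E}(n)^*(BO(1))$ exactly as in the preceding proof, and track the class $u_n$ restricted along $BO(1) \to BO$.

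The key computation is that $d_{2^{n+1}-1}$ is nonzero on the Thom class. Here I would use the naturality square relating $MO(1)[2^n]$ to $MO(1)[2^{n-1}]$ and the Leibniz formula from Theorem \ref{bss}\,(vi), just as in the proof above, but now I would argue that the ``multiple of $2$'' appearing as $\Delta^* \bigl(d_{2^{n+1}-1}(u_{n-1}) \wedge u_{n-1}\bigr)$ does \emph{not} vanish: at the last stage $r = 2^n-1$ puts us in the critical degree, and the target group $R_n[v_n^{\pm 2^n}]/I_n \otimes \hat{E}(n)^*(BO(1))$ on $y^{2^{n+1}-1}$ is \emph{not} a $\Z/2$-vector space killing the relevant element — indeed, the generator $A = 2 v_n^{2^n}$-type class (cf.\ the $n=3$ remark, $A = 2v_3^8$) survives. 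So the obstruction that was automatically zero for all $r < 2^{n+1}-1$ becomes genuinely nonzero precisely because the degree $|z| = 2^n + (2^n-1) + 1 = 2^{n+1}$ modulo $2^{n+1}$ is now $0$, landing squarely on a surviving class rather than in an empty degree. Comparing with Theorem \ref{structure}\,(ii), where $d_{2^{n+1}-1}(v_n^{-2^n}) = \hat{v}_n\, y^{2^{n+1}-1}\, v_n^{-2^{n+n}}$ is the genuine final differential for a point, and using that $\hat{E}(n)^*(BO(1))$ is Landweber flat so this differential persists after tensoring, I would conclude $d_{2^{n+1}-1}(u_n) \ne 0$, hence $u_n$ does not survive and $MO(1)[2^n]$ — and a fortiori $MO[2^n]$ — is not $ER(n)$-orientable. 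Since every Thom class in $\mbox{E}_1(MO[2^n]^{\R P^\infty})$ differs from $u_n$ by a unit times a permanent cycle from $\hat{E}(n)^*(BO(1))$, no choice of Thom class survives, which is the assertion.

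The main obstacle will be the degree bookkeeping: one must check that in the critical degree $2^{n+1} \bmod 2^{n+2}$ the restriction of the surviving $\hat{E}(n)^*$-summand to $BO(1)$ really is nonzero — i.e.\ that the Euler class of $2^n\gamma_1$ does not accidentally annihilate the class $\hat{v}_n v_n^{-2^{n+n}}$ in $\mbox{E}_{2^{n+1}-1}^{*,*}(\R P^\infty)$. This is where one needs the explicit form of $ER(n)^*(\R P^\infty)$ from \cite{NituP}, \cite{NituP2}, and the fact that the mod-$2$ Euler class of $2^n\gamma_1$ is a power of $\hat{c}_1$ that is detected in the $R_n/I_n$ summand. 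Granting that, the argument closes formally by the same Leibniz/naturality mechanism already deployed in the proof of the first part of Theorem \ref{orient}.
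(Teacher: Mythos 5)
Your choice of test bundle is essentially the paper's: the paper takes $\zeta$ over $B\Z/2$ with $Th(\zeta) = E\Z/2_+\wedge_{\Z/2}S^{\alpha-1}$, which is $\gamma_1 - 1$ over $\R P^\infty$ up to a trivial summand; you take $\gamma_1$ itself. So the two proofs agree on \emph{where} to look. But the mechanism you propose for detecting nonorientability does not close, and the paper's actual argument is entirely different.

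The central gap is in your step showing $d_{2^{n+1}-1}(u_n)\ne 0$. You claim the Leibniz term $\Delta^*(d_{2^{n+1}-1}(u_{n-1})\wedge u_{n-1})$ ``does not vanish'' because the target ``is not a $\Z/2$-vector space killing the relevant element.'' This is doubly wrong. First, Theorem \ref{bss}(v) says $\mbox{E}_r^{i,j}(X)$ is a $\Z/2$-vector space whenever $r>1$ and $i>0$; the target of $d_{2^{n+1}-1}(u_n)$ sits in filtration $i=2^{n+1}-1>0$, so it \emph{is} a $\Z/2$-vector space, and the Leibniz expression, being twice something, would necessarily vanish. (The class $A=2v_n^{2^n}$ you invoke lives in the $m=0$ row, not in the row where the differential lands.) Second, and more fundamentally, the Leibniz formula cannot even be written down at this stage: by the very claim being proved (see the footnote in the paper's proof of the first half), $d_{2^n-1}(u_{n-1})\ne 0$ in $\mbox{E}_{2^n-1}(MO[2^{n-1}])$, so $u_{n-1}$ is dead by page $2^n$ and $d_{2^{n+1}-1}(u_{n-1})$ does not exist. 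Both the Leibniz argument and the degree argument from the orientability proof are one-sided --- they force vanishing when they apply and simply say nothing when they don't. Observing that the critical degree $2^{n+1}\bmod 2^{n+2}$ is nonempty tells you the differential is not forced to vanish for degree reasons; it is not a proof that it is nonzero, and your appeal to $d_{2^{n+1}-1}(v_n^{-2^n})\ne 0$ on coefficients does not transfer to the Thom class, which is an independent module generator. Your own ``main obstacle'' paragraph is essentially an admission that the decisive computation has been deferred.

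The paper sidesteps all of this. It never runs the Bockstein spectral sequence for the Thom spectrum. Instead, it observes that an $ER(n)$-orientation of $2^n\zeta$ would, by adjointness, produce a class in $\pi_{2^n(\alpha-1)}\E\R(n)$ whose nonequivariant restriction is a unit, and then cites the Hu--Kriz computation of the $RO(\Z/2)$-graded homotopy of $\E\R(n)$ to see that no such class exists. This is a one-line contradiction once the bigraded coefficients are in hand, and it works uniformly in $n$. If you want to salvage a spectral-sequence proof along your lines, you would need to actually compute the Bockstein spectral sequence for the stunted projective spectrum $\R P^\infty_{2^n}$ and exhibit the nonzero target of $d_{2^{n+1}-1}(u_n)$ explicitly; the references you cite (\cite{NituP}, \cite{NituP2}) do this only for $n=2$, so at minimum the general case requires new input, which is exactly what the Hu--Kriz calculation supplies in the paper's proof.
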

\begin{proof}
Let $S^{\alpha}$ denote the one point compactification of the sign representation of $\Z/2$. Consider the virtual vector bundle $\zeta$ over $B\Z/2$ with Thom space given by: 
\[ Th(\zeta) = E\Z/2_+ \wedge_{\Z/2} S^{(\alpha-1)}. \] 
If $2^n\zeta$ were to admit an $ER(n)$-orientation, then one obtains a $\Z/2$-equivariant map representing the Thom class $\mu$: 
\[  \mu : E\Z/2_+ \wedge S^{2^n(\alpha-1)} \longrightarrow ER(n) \longrightarrow E(n). \]
By adjointness, one obtains a class $\mu \in \pi_{2^n(\alpha-1)} \E\R(n)$, where $\E\R (n)$ denotes the $\mbox{RO}(\Z/2)$-graded real spectrum representing Johnson-Wilson theory \cite{Nitufib}. This class has the property that it restricts to a unit on forgetting the $\Z/2$-equivariant structure. However, the computation of the bigraded homotopy of $\E\R (n)$ given in \cite{HK} shows that there is no such class. Hence we obtain a contradiction to the existance of $\mu$. 
\end{proof}

\begin{remark}
Theorem \ref{orient} is by no means optimal. 
For example, for $n=1$ we know that $ER(1)$ is 2-localized real 
$K$-theory. Hence a bundle $\zeta$ is $ER(1)$-orientable if and 
only if it is {\em spin}. This is equivalent to $w_1(\zeta) = w_2(\zeta) = 0$. 
This holds for bundles of the form $\zeta = 4\xi$, 
but clearly there are spin bundles that are not divisible by $4$. Similarly, for $ER(2)$, the results of \cite{KS} suggest that a bundle $\zeta$ is $ER(2)$-orientable if and only if $w_1(\zeta) = w_2(\zeta) = w_4(\zeta) = 0$, which is clearly true for bundles of the form $\zeta = 8\xi$. 
It is a compelling question to find a nice answer in 
general for when a bundle is $ER(n)$-orientable, or even to 
show that an answer to this question may be given in closed form. 
\end{remark}


\pagestyle{empty}

\end{document}